\begin{document}

\newtheorem{theorem}[subsection]{Theorem}
\newtheorem{proposition}[subsection]{Proposition}
\newtheorem{lemma}[subsection]{Lemma}
\newtheorem{corollary}[subsection]{Corollary}
\newtheorem{conjecture}[subsection]{Conjecture}
\newtheorem{prop}[subsection]{Proposition}
\newtheorem{defin}[subsection]{Definition}

\numberwithin{equation}{section}
\newcommand{\mr}{\ensuremath{\mathbb R}}
\newcommand{\mc}{\ensuremath{\mathbb C}}
\newcommand{\dif}{\mathrm{d}}
\newcommand{\intz}{\mathbb{Z}}
\newcommand{\ratq}{\mathbb{Q}}
\newcommand{\natn}{\mathbb{N}}
\newcommand{\comc}{\mathbb{C}}
\newcommand{\rear}{\mathbb{R}}
\newcommand{\prip}{\mathbb{P}}
\newcommand{\uph}{\mathbb{H}}
\newcommand{\fief}{\mathbb{F}}
\newcommand{\majorarc}{\mathfrak{M}}
\newcommand{\minorarc}{\mathfrak{m}}
\newcommand{\sings}{\mathfrak{S}}
\newcommand{\fA}{\ensuremath{\mathfrak A}}
\newcommand{\mn}{\ensuremath{\mathbb N}}
\newcommand{\mq}{\ensuremath{\mathbb Q}}
\newcommand{\half}{\tfrac{1}{2}}
\newcommand{\f}{f\times \chi}
\newcommand{\summ}{\mathop{{\sum}^{\star}}}
\newcommand{\chiq}{\chi \bmod q}
\newcommand{\chidb}{\chi \bmod db}
\newcommand{\chid}{\chi \bmod d}
\newcommand{\sym}{\text{sym}^2}
\newcommand{\hhalf}{\tfrac{1}{2}}
\newcommand{\sumstar}{\sideset{}{^*}\sum}
\newcommand{\sumprime}{\sideset{}{'}\sum}
\newcommand{\sumprimeprime}{\sideset{}{''}\sum}
\newcommand{\sumflat}{\sideset{}{^\flat}\sum}
\newcommand{\shortmod}{\ensuremath{\negthickspace \negthickspace \negthickspace \pmod}}
\newcommand{\V}{V\left(\frac{nm}{q^2}\right)}
\newcommand{\sumi}{\mathop{{\sum}^{\dagger}}}
\newcommand{\mz}{\ensuremath{\mathbb Z}}
\newcommand{\leg}[2]{\left(\frac{#1}{#2}\right)}
\newcommand{\muK}{\mu_{\omega}}
\newcommand{\thalf}{\tfrac12}
\newcommand{\lp}{\left(}
\newcommand{\rp}{\right)}
\newcommand{\Lam}{\Lambda_{[i]}}
\newcommand{\lam}{\lambda}
\newcommand{\af}{\mathfrak{a}}
\newcommand{\sw}{S_{[i]}(X,Y;\Phi,\Psi)}
\newcommand{\lz}{\left(}
\newcommand{\pz}{\right)}
\newcommand{\bfrac}[2]{\lz\frac{#1}{#2}\pz}
\newcommand{\odd}{\mathrm{\ primary}}
\newcommand{\even}{\text{ even}}
\newcommand{\res}{\mathrm{Res}}
\newcommand{\sumn}{\sumstar_{(c,1+i)=1}  w\left( \frac {N(c)}X \right)}
\newcommand{\lab}{\left|}
\newcommand{\rab}{\right|}
\newcommand{\Go}{\Gamma_{o}}
\newcommand{\Ge}{\Gamma_{e}}
\newcommand{\M}{\widehat}

\theoremstyle{plain}
\newtheorem{conj}{Conjecture}
\newtheorem{remark}[subsection]{Remark}

\makeatletter
\def\widebreve{\mathpalette\wide@breve}
\def\wide@breve#1#2{\sbox\z@{$#1#2$}%
     \mathop{\vbox{\m@th\ialign{##\crcr
\kern0.08em\brevefill#1{0.8\wd\z@}\crcr\noalign{\nointerlineskip}%
                    $\hss#1#2\hss$\crcr}}}\limits}
\def\brevefill#1#2{$\m@th\sbox\tw@{$#1($}%
  \hss\resizebox{#2}{\wd\tw@}{\rotatebox[origin=c]{90}{\upshape(}}\hss$}
\makeatletter

\title[Twisted first moment of central values of primitive quadratic Dirichlet $L$-functions]{Twisted first moment of central values of primitive quadratic Dirichlet $L$-functions}

%%\date{\today}
\author[P. Gao]{Peng Gao}
\address{School of Mathematical Sciences, Beihang University, Beijing 100191, China}
\email{penggao@buaa.edu.cn}

\author[L. Zhao]{Liangyi Zhao}
\address{School of Mathematics and Statistics, University of New South Wales, Sydney NSW 2052, Australia}
\email{l.zhao@unsw.edu.au}

\begin{abstract}
We evaluate the twisted first moment of central values of the family of primitive quadratic Dirichlet $L$-functions using the method of double Dirichlet series together with a recursive argument.  Our main result is an asymptotic formula with an error term of size that is the square root of that of the main term.
\end{abstract}

\maketitle

\noindent {\bf Mathematics Subject Classification (2010)}: 11M06, 11M32  \newline

\noindent {\bf Keywords}:  quadratic Dirichlet $L$-functions, first moment, double Dirichlet series

\section{Introduction}\label{sec 1}

The moments of central values of families of $L$-functions form important subjects of study in number theory as those results lead to a multitude of arithmetic consequences. Among the many families of $L$-functions, the quadratic ones are most thoroughly studied.   Notably, the family of primitive quadratic Dirichlet $L$-functions has been first investigated by M. Jutila \cite{Jutila}, who obtained asymptotic formulas for the first and second moment. The error term for the first moment in Jutila's work were improved by D. Goldfeld and J. Hoffstein \cite{DoHo}, A. I. Vinogradov and L. A. Takhtadzhyan  \cite{ViTa} and M. P. Young \cite{Young1}. \newline

   Currently, the best known error term for the smoothed first moment of central values of primitive quadratic Dirichlet $L$-functions were obtained in \cite{DoHo} and \cite{Young1}, which employed different approaches, the powerful method of double Dirichlet series adapted in \cite{DoHo} and a recursive argument used in \cite{Young1}. \newline

  The aim of the present paper is to give an alternative proof of the above mentioned result in \cites{DoHo, Young1}, by merging the two above-mentioned methods of double Dirichlet series and recursion. The starting point is to apply the recursive argument to connect the first moment of primitive quadratic Dirichlet $L$-functions to that of quadratic Dirichlet $L$-functions over all odd moduli. This process naturally leads to the consideration on twisted first moments of the families of $L$-functions under our consideration. We therefore need to enlarge our investigation by studying these twisted moments instead. We then fulfil the task by evaluating the twisted first moment of quadratic Dirichlet $L$-functions over all conductors asymptotically using the method of double Dirichlet series by extending our arguments in \cite{G&Zhao20} for the untwisted case. \newline

  To precisely state our results, let $\chi^{(m)}=\leg {m}{\cdot}$ denote the Kronecker symbol defined for any integer $m \equiv 0$, $1 \pmod 4$, following the notation given on \cite[p. 52]{iwakow}. We also write $\chi_{m}=\leg {\cdot}{m}$ for the Jacobi symbol with an odd integer $m$.  Given any $L$-function, $L^{(c)}$ stands for the function given by the Euler product defining $L$ but omitting those primes dividing $c$. As usual, $\zeta(s)$ is the Riemann zeta function and the letter $p$ denotes a prime throughout the paper. Let $l>0$ be an odd, square-free integer and $w(t)$ be a fixed non-negative smooth function compactly supported on ${\mr}^+$, the set of positive real numbers.  We are primarily interested in the twisted first moment of the central values of the family of primitive quadratic Dirichlet $L$-functions given by
\begin{align}
\label{ZetWithCharacters}
  \sumstar_{(d,2)=1}L(\tfrac 12, \chi^{(8d)})\chi^{(8d)}(l)w \bfrac {d}X,
\end{align}
  where $\sum^*$ indicates that the sum runs over square-free integers.  The above family of $L$-functions was first considered by K. Soundararajan in \cite{sound1}. Here we note that (see \cite{sound1}) that if $d$ is positive, odd and square-free, the character $\chi^{(8d)}$ is primitive modulo $8d$ such that $\chi^{(8d)}(-1)=1$. \newline

    As alluded to above, our understanding of the expression in \eqref{ZetWithCharacters} requires the evaluation of the twisted first moment of central values of the family of quadratic Dirichlet $L$-functions over all conductors given by
\begin{align*}
%%\label{ZetWithallCharacters}
  \sum_{(d,2)=1}L(\tfrac 12, \chi^{(8d)})\chi^{(8d)}(l)w \bfrac {d}X.
\end{align*}

   In fact, we shall study a shifted moment in which $\tfrac12$ in \eqref{ZetWithCharacters} is replaced by $\tfrac12+\alpha$ with $0 < |\Re(\alpha)| < \tfrac12$.  Moreover, we apply the M\"obius function to remove the condition that $d$ is square-free in \eqref{ZetWithCharacters}.  This gives
\begin{equation} \label{squarefreetoall}
	\sumstar_{\substack{(d,2)=1 }}L(\tfrac{1}{2}+\alpha, \chi^{(8d)})\chi^{(8d)}(l)w \bfrac {d}X=\sum_{(a, 2l)=1}\mu(a)\sum_{\substack{(d,2)=1 }}L(\tfrac{1}{2}+\alpha, \chi^{(8da^2)})\chi^{(8d)}(l)w \bfrac {da^2}X.
\end{equation}
  We then separate the sum over $a$ of the right-hand side expression above by considering the terms with $a \leq Y$ and with $a > Y$ for a parameter $Y$ to be specified later, so that we recast the right-hand side \eqref{squarefreetoall} as $M_1+M_2$ with
\begin{align}
\label{M1M2}
\begin{split}
 M_1:=& \sum_{\substack{a \leq Y \\ (a, 2l)=1}}\mu(a)\sum_{\substack{(d,2)=1 }}L(\tfrac{1}{2}+\alpha, \chi^{(8da^2)})\chi^{(8d)}(l)w \bfrac {da^2}X \quad \mbox{and} \\
 M_2:=& \sum_{\substack{a > Y \\ (a, 2l)=1}}\mu(a)\sum_{\substack{(d,2)=1 }}L(\tfrac{1}{2}+\alpha, \chi^{(8da^2)})\chi^{(8d)}(l)w \bfrac {da^2}X.
\end{split}
\end{align}

Observe that
\begin{align}
\label{Lrel}
 L(\tfrac{1}{2}+\alpha, \chi^{(8da^2)})=L(\tfrac{1}{2}+\alpha, \chi^{(8d)})\prod_{p|a} \left( 1-\frac {\chi^{(8d)}(p)}{p^{1/2+\alpha}} \right)=L(\tfrac{1}{2}+\alpha, \chi^{(8d)})\sum_{r | a}\frac {\mu(r)\chi^{(8d)}(r) }{r^{1/2+\alpha}}.
\end{align}
The identity \eqref{Lrel} thus transforms $M_1$ as
\begin{align}
\label{M1simplified}
 M_1=& \sum_{\substack{a \leq Y \\ (a, 2l)=1}}\mu(a)\sum_{r | a}\frac {\mu(r)}{r^{1/2+\alpha}}\sum_{\substack{(d,2)=1 }}L(\tfrac{1}{2}+\alpha, \chi^{(8d)})\chi^{(8d)}(rl)w \bfrac {da^2}X.
\end{align}

On the other hand, $M_2$, via a change of variables $d \rightarrow db^2$, becomes
\begin{align*}
%%\label{M2simplified}
 M_2 =& \sumstar_{(d,2)=1}\sum_{(b,2l)=1}\sum_{\substack{a > Y \\ (a, 2l)=1}}\mu(a)L(\tfrac{1}{2}+\alpha, \chi^{(8d(ab)^2)})\chi^{(8d)}(l)w \bfrac {d(ab)^2}X.
\end{align*}
Setting $c=ab$ and applying \eqref{Lrel} lead to
\begin{align}
\label{M2simplifiedc}
\begin{split}
 M_2 =& \sumstar_{(d,2)=1}\sum_{(c,2l)=1}\Big(\sum_{\substack{a|c \\ a > Y }}\mu(a)\Big)L(\tfrac{1}{2}+\alpha, \chi^{(8dc^2)})\chi^{(8d)}(l)w \bfrac {dc^2}X \\
 =& \sum_{(c,2l)=1}\Big(\sum_{\substack{a|c \\ a > Y }}\mu(a)\Big)\sum_{r | c}\frac {\mu(r)}{r^{1/2+\alpha}}\sumstar_{(d,2)=1}L(\tfrac{1}{2}+\alpha, \chi^{(8d)})\chi^{(8d)}(lr)w \bfrac {dc^2}X.
\end{split}
\end{align}

  We shall evaluate $M_1$ and $M_2$ using different strategies. For $M_1$, we shall apply the method of double Dirichlet series to treat the inner-most sum on the right-hand side of \eqref{M1simplified} both unconditionally and under the Riemann hypothesis (RH).  To deal with $M_2$, we shall employ a recursive argument due to Young \cite{Young1}. The formula we need for $M_1$ is as follows.
\begin{theorem}
\label{Theorem for all characters}
 With the notation as above, let $l>0$ be any odd, square-free integer and $w(t)$ a non-negative smooth function compactly supported on ${\mr}^+$ with $\widehat w(s)$ being its Mellin transform.  For $0<|\Re(\alpha)|<1/2$ and any $\varepsilon>0$, we have
\begin{align}
\label{Asymptotic for ratios of all characters}
\begin{split}	
 \sum_{\substack{(d,2)=1}}& L(\tfrac{1}{2}+\alpha, \chi^{(8d)})\chi^{(8d)}(l) w \bfrac {d}X \\
=&  X\M w(1)\frac {\zeta^{(2)}(1+2\alpha)}{2l^{1/2+\alpha}\zeta^{(2)}(2+2\alpha)}\prod_{p|l}\frac{1-p^{-1}}{1-p^{-2-2\alpha}} +X^{1-\alpha}\M w(1-\alpha) \gamma_{\alpha}l^{-1/2+\alpha} \frac {\zeta^{(2)}(1-2\alpha)}{2\zeta^{(2)}(2)} \prod_{p|l} \Big(1+p^{-1} \Big)^{-1} \\
& \hspace*{3cm} +O\lz(1+|\alpha|)^{2+\varepsilon}l^{1/2+\varepsilon}X^{\delta_{\alpha}+\varepsilon}\pz .
\end{split}
\end{align}
Here we can take $\delta_{\alpha}=1/2-\Re(\alpha)$ unconditionally and $\delta_{\alpha}=1/4-\Re(\alpha)$ assuming the truth of RH.  Moreover,
\begin{align}
\label{gammadef}
\begin{split}
\gamma_{\alpha}=\leg {8}{\pi}^{-\alpha}\frac {\Gamma(\tfrac{1}{4}-\tfrac{\alpha}{2})}{\Gamma(\tfrac{1}{4}+\tfrac{\alpha}{2})}.
\end{split}
\end{align}
\end{theorem}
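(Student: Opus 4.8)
\emph{Proof sketch.}
The plan is to establish \eqref{Asymptotic for ratios of all characters} by the double Dirichlet series method, extending the treatment of the untwisted first moment in \cite{G&Zhao20}. Set $\Phi(u) := \sum_{(d,2)=1} L(u, \chi^{(8d)})\chi^{(8d)}(l)\,w(d/X)$; for $\Re(u)>1$ the Dirichlet series for $L(u,\chi^{(8d)})$ converges absolutely, and since $\chi^{(8d)}(2)=0$ we may write
\[
\Phi(u) = \sum_{\substack{n\ge 1\\ (n,2)=1}} \frac{1}{n^{u}} \sum_{(d,2)=1} \chi^{(8d)}(ln)\, w\!\left(\frac dX\right).
\]
To the inner character sum, taken over \emph{all} odd $d$ rather than only the square-free ones (which is exactly what makes this step clean, non-primitivity of $\chi^{(8d)}$ causing no trouble), I apply the Poisson summation formula for quadratic characters to odd moduli in the form used by Soundararajan \cite{sound1}: for odd $m$,
\[
\sum_{(d,2)=1}\chi^{(8d)}(m)\,w\!\left(\frac dX\right)
= \frac{X}{2m}\left(\frac{2}{m}\right)\sum_{k\in\mathbb Z}(-1)^{k} G_k(m)\,\widetilde w\!\left(\frac{Xk}{2m}\right),
\]
where $G_k(m)$ is the associated Gauss-type sum and $\widetilde w$ a cosine-type transform of $w$ with $\widetilde w(0)=\widehat w(1)$. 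Inserting this and separating $k=0$ gives $\Phi(u) = D(u) + E(u)$, with $D(u)$ the $k=0$ contribution, manifestly meromorphic, and $E(u)$ the $k\neq 0$ dual sum, which will be shown to be meromorphic below; hence $\Phi$ continues to $u=\tfrac12+\alpha$, where it is to be evaluated.

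\emph{The diagonal $D(u)$.} Since $l$ is square-free, $G_0(ln)$ vanishes unless $ln$ is a perfect square, i.e. unless $n=lj^{2}$ with $j$ odd, and for such $n$ its value is an explicit multiplicative function of $lj$. Carrying out the resulting sum over $j$ as an Euler product and completing it at the primes dividing $2l$ yields
\[
D(u) = \frac{X\widehat w(1)}{2\,l^{u}}\cdot\frac{\zeta^{(2)}(2u)}{\zeta^{(2)}(1+2u)}\prod_{p\mid l}\frac{1-p^{-1}}{1-p^{-1-2u}},
\]
and setting $u=\tfrac12+\alpha$ reproduces the first main term of \eqref{Asymptotic for ratios of all characters} verbatim.

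\emph{The dual sum $E(u)$ and the error term.} In $E(u)$ I write $\widetilde w$ through its Mellin transform, which equals $\widehat w$ (with argument reflected) times an archimedean $\Gamma$-factor — precisely the gamma factor from the functional equation of $L(\cdot,\chi^{(8d)})$ for even characters — thereby turning $E(u)$ into a contour integral of this factor against a double Dirichlet series $\sum_{n\,\mathrm{odd}}\sum_{k\neq 0} (-1)^k G_k(ln)\, n^{-s} k^{-v}$. Using the (essentially multiplicative) evaluation of the Gauss sums $G_k(m)$ one identifies, for each fixed $n$, the sum over $k$ with a Dirichlet $L$-function of the shape $L(\,\cdot\,,\chi_k)$ times elementary factors, and hence $E(u)$ with a combination of $L$-functions and zeta values in the spirit of \cite{DoHo}; this is the ``second'' functional equation of the double Dirichlet series. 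Moving the auxiliary contour one crosses a simple pole of a zeta factor whose residue, after combination with the $\Gamma$-factor coming from $\widetilde w$, produces exactly the secondary main term $X^{1-\alpha}\widehat w(1-\alpha)\gamma_\alpha\, l^{-1/2+\alpha}\zeta^{(2)}(1-2\alpha)/(2\zeta^{(2)}(2))\prod_{p\mid l}(1+p^{-1})^{-1}$, with $\gamma_\alpha$ as in \eqref{gammadef}; the hypothesis $0<|\Re(\alpha)|<\tfrac12$ ensures that this is the only pole encountered. The remaining integral is then estimated by inserting bounds for the occurring Dirichlet $L$-functions and for $\zeta$ on vertical lines just inside the critical strip: the convexity bound gives the unconditional exponent $\delta_\alpha=\tfrac12-\Re(\alpha)$, while the Lindelöf bound available under GRH improves it to $\delta_\alpha=\tfrac14-\Re(\alpha)$. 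Tracking the dependence on $l$ through the Gauss-sum evaluations yields the factor $l^{1/2+\varepsilon}$, and Stirling estimates for the $\Gamma$-factors account for the factor $(1+|\alpha|)^{2+\varepsilon}$.

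\emph{Main obstacle.} The technical heart of the argument is the analysis of the dual sum $E(u)$: one must evaluate the Gauss sums $G_k(ln)$ for general (non-square-free) odd $n$ and general $k$ — including the terms with $(k,ln)>1$ — assemble them into an explicit double Dirichlet series, locate its poles, and extract the residue precisely enough to see the exact constant $\gamma_\alpha$ and the Euler product $\prod_{p\mid l}(1+p^{-1})^{-1}$, all while keeping every estimate uniform in both $l$ and $\alpha$. The interaction of the twist $\chi^{(8d)}(l)$ with the Gauss sums at the primes dividing $l$ is the part of this that requires the most care.
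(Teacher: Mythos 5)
Your overall strategy --- expand $L(u,\chi^{(8d)})$ as a Dirichlet series, execute the sum over \emph{all} odd $d$, and analyse the resulting Gauss-sum double Dirichlet series --- is essentially the paper's, only packaged differently: the paper keeps the weight $w(d/X)$ outside via Mellin inversion in \eqref{Integral for all characters}, recognizes the $d$-sum as $L^{(2)}(s,\chi_{ml})$ and applies the functional equation of Lemma \ref{Functional equation with Gauss sums} in the variable $s$ conjugate to $d$, whereas you apply Poisson summation in $d$ directly; the dual object $\sum_{n,k}G_k(ln)\,n^{-s}k^{-v}$ you arrive at is the same as the paper's $C_1$ in \eqref{C12def}. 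Your diagonal computation is correct and matches \eqref{Residue at s=1}, and the pole you cross (at what amounts to $s+w=3/2$) is the right source of the secondary term and of $\gamma_\alpha$.

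The genuine gap is in the error-term analysis. The dichotomy $\delta_\alpha=1/2-\Re(\alpha)$ unconditionally versus $1/4-\Re(\alpha)$ under GRH does \emph{not} come from convexity versus Lindel\"of upper bounds for the $L$-functions occurring in the dual sum. It comes from the region of meromorphic continuation of the Gauss-sum series itself: the non-square-free part of the summation variable produces local factors with denominators of the shape $\zeta(2w-1)^{-1}$ (see \eqref{Dgenexp}), so the continuation is blocked by possible zeros of $\zeta$ --- unconditionally one only reaches $\Re(w)>1$, and only under GRH $\Re(w)>3/4$, which after the change of variables is exactly what limits the contour shift to $\Re(s)=1/2-\Re(\alpha)+\varepsilon$, resp.\ $1/4-\Re(\alpha)+\varepsilon$. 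Upper bounds of Lindel\"of quality would not by themselves carry the contour past the line $\Re(s+w)=1$. Relatedly, your sketch supplies no mechanism for bounding the continued object polynomially in $|s|$, $|w|$ and $l$ throughout the intermediate region where the final contour lies; the paper obtains this from the quadratic large sieve (Lemma \ref{lem:2.3}) in the initial regions of absolute convergence, combined with Bochner's tube theorem and Proposition \ref{Extending inequalities} to propagate the bounds to the convex hull, yielding \eqref{AboundS4}. Without such bounds the contour shift, and hence the factors $(1+|\alpha|)^{2+\varepsilon}l^{1/2+\varepsilon}X^{\delta_\alpha+\varepsilon}$, are not justified.
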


    Theorem \ref{Theorem for all characters} now allows us to evaluate $M_1$ asymptotically. Moreover, $M_1$ becomes the expression given in \eqref{ZetWithCharacters} upon taking $Y$ to $\infty$ in \eqref{M1M2}.  In fact, taking $Y \to \infty$ and summing trivially the error term using  $\delta_{\alpha}=1/2-\Re(\alpha)$ in \eqref{Asymptotic for ratios of all characters} establish the following result on the twisted first moment of central values of the family of primitive quadratic Dirichlet $L$-functions.
\begin{theorem}
\label{Theorem for primitive characters}
 With the notation as above and for any $\varepsilon>0$, we have uniformly for $\Re(\alpha) \ll (\log{X})^{-1}$ and $\Im(\alpha) \ll X^{\varepsilon}$,
\begin{align}
\label{Asymptotic for ratios of primitive characters}
\begin{split}	
 \sumstar_{\substack{(d,2)=1}} & L(\tfrac{1}{2}+\alpha, \chi^{(8d)})\chi^{(8d)}(l) w \bfrac {d}X \\
=& \frac{X \widehat{w}(1)}{2 \zeta^{(2)}(2)} l^{-1/2 - \alpha} \zeta^{(2)}(1 + 2\alpha) B_{\alpha}(l)
+ \frac{X^{1-\alpha} \widehat{w}(1-\alpha) \gamma_{\alpha}}{2 \zeta^{(2)}(2)} l^{-1/2 + \alpha} \zeta^{(2)}(1 - 2\alpha) B_{-\alpha}(l)  + O((lX)^{1/2 + \varepsilon}),
\end{split}
\end{align}
  where $B_{\alpha}$ is defined by
\begin{equation*}
\zeta^{(2)}(1 + 2\alpha) B_{\alpha}(l) = \sum_{(n,2)=1} \frac{1}{n^{1 + 2\alpha}} \prod_{p | nl} (1 + p^{-1})^{-1}.
\end{equation*}
\end{theorem}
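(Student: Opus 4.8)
The plan is to deduce this from Theorem \ref{Theorem for all characters} by letting the truncation parameter $Y$ of \eqref{M1M2} tend to infinity. Starting from \eqref{squarefreetoall} and applying \eqref{Lrel} as in the passage to \eqref{M1simplified}, but leaving the $a$-sum untruncated, one obtains
\begin{align*}
 \sumstar_{(d,2)=1}L(\tfrac12+\alpha,\chi^{(8d)})\chi^{(8d)}(l)w\bfrac{d}{X}=\sum_{(a,2l)=1}\mu(a)\sum_{r\mid a}\frac{\mu(r)}{r^{1/2+\alpha}}\sum_{(d,2)=1}L(\tfrac12+\alpha,\chi^{(8d)})\chi^{(8d)}(rl)w\bfrac{da^2}{X}.
\end{align*}
Since $\mu(a)\neq0$ forces $a$, hence $r\mid a$, to be square-free while $(a,2l)=1$ forces $(r,l)=1$, the modulus $rl$ is again odd and square-free, so Theorem \ref{Theorem for all characters} applies to the inner-most $d$-sum with $X$ replaced by $X/a^{2}$ and $l$ by $rl$. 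I would first run the argument for $0<|\Re(\alpha)|\ll(\log X)^{-1}$ and afterwards let $\Re(\alpha)\to0$: the left-hand side is a finite sum, hence entire in $\alpha$, while the two explicit main terms below, though each has a simple pole at $\alpha=0$ coming from $\zeta^{(2)}(1\pm2\alpha)$, have opposite residues there (using $\gamma_{0}=1$ and $\res_{s=1}\zeta^{(2)}(s)=\tfrac12$), so their sum is holomorphic near $\alpha=0$ and the estimate persists by continuity.

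Next I would handle the error term of Theorem \ref{Theorem for all characters}. Since $w$ has compact support only $a\ll X^{1/2}$ occur, and in the region $\Re(\alpha)\ll(\log X)^{-1}$, $\Im(\alpha)\ll X^{\varepsilon}$ one has $X^{\pm\Re(\alpha)}\asymp1$, $a^{2|\Re(\alpha)|}\ll1$, $1+|\alpha|\ll X^{\varepsilon}$ and $|r^{-1/2-\alpha}|\ll r^{-1/2}$. Taking $\delta_{\alpha}=1/2-\Re(\alpha)$ (the unconditional value), the total error is
\begin{align*}
 \ll\sum_{\substack{a\ll X^{1/2}\\(a,2l)=1}}\Bigl(\frac{X}{a^{2}}\Bigr)^{1/2-\Re(\alpha)+\varepsilon}\sum_{r\mid a}\frac{(1+|\alpha|)^{2+\varepsilon}(rl)^{1/2+\varepsilon}}{r^{1/2}}&\ll l^{1/2+\varepsilon}X^{1/2+\varepsilon}\sum_{a\ll X^{1/2}}a^{-1+2\Re(\alpha)+\varepsilon}\\&\ll(lX)^{1/2+\varepsilon},
\end{align*}
which is the error term in \eqref{Asymptotic for ratios of primitive characters}.

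It remains to evaluate the two main terms. Substituting $X\mapsto X/a^{2}$, $l\mapsto rl$ and summing over $r\mid a$ and then over $a$, the arithmetic sums factor as absolutely convergent Euler products over the primes $p\nmid2l$. For the first main term one uses $\sum_{r\mid a}\mu(r)r^{-1-2\alpha}\prod_{p\mid r}\frac{1-p^{-1}}{1-p^{-2-2\alpha}}=\prod_{p\mid a}\frac{1-p^{-1-2\alpha}}{1-p^{-2-2\alpha}}$, and then $\sum_{(a,2l)=1}\mu(a)a^{-2}\prod_{p\mid a}\frac{1-p^{-1-2\alpha}}{1-p^{-2-2\alpha}}=\prod_{p\nmid2l}\bigl(1-\tfrac{1-p^{-1-2\alpha}}{p^{2}(1-p^{-2-2\alpha})}\bigr)$; combining this product with the already present factor $\zeta^{(2)}(2+2\alpha)^{-1}$ and the finite product $\prod_{p\mid l}\frac{1-p^{-1}}{1-p^{-2-2\alpha}}$ and simplifying each local factor turns $\zeta^{(2)}(2+2\alpha)^{-1}$ into $\zeta^{(2)}(2)^{-1}$ and collapses the rest to the Euler product of $\zeta^{(2)}(1+2\alpha)B_{\alpha}(l)$, which is the first main term of \eqref{Asymptotic for ratios of primitive characters}. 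For the second main term the $\alpha$-dependence of the $r$-sum drops out, $\sum_{r\mid a}\mu(r)r^{-1}\prod_{p\mid r}(1+p^{-1})^{-1}=\prod_{p\mid a}(1+p^{-1})^{-1}$, and then $\sum_{(a,2l)=1}\mu(a)a^{-2+2\alpha}\prod_{p\mid a}(1+p^{-1})^{-1}=\prod_{p\nmid2l}\bigl(1-\tfrac{p^{-2+2\alpha}}{1+p^{-1}}\bigr)$, which together with $\prod_{p\mid l}(1+p^{-1})^{-1}$ is exactly $B_{-\alpha}(l)$; since $\tfrac12\zeta^{(2)}(2)^{-1}\zeta^{(2)}(1-2\alpha)\gamma_{\alpha}X^{1-\alpha}\widehat{w}(1-\alpha)l^{-1/2+\alpha}$ is already provided by Theorem \ref{Theorem for all characters}, this reproduces the second main term. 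Since the substantial input sits in Theorem \ref{Theorem for all characters}, this last part is largely bookkeeping; the points requiring care are the uniform absolute convergence of the $a$-sums throughout the strip $\Re(\alpha)\ll(\log X)^{-1}$, $\Im(\alpha)\ll X^{\varepsilon}$, the prime-by-prime verification that the Euler products collapse to $B_{\pm\alpha}(l)$ (in particular that $\zeta^{(2)}(2+2\alpha)$ becomes $\zeta^{(2)}(2)$), and the continuity argument extending the conclusion to $\Re(\alpha)=0$.
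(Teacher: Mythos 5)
Your proposal is correct and follows essentially the same route as the paper: the authors also obtain Theorem \ref{Theorem for primitive characters} by letting $Y\to\infty$ in the decomposition $M_1+M_2$ (so only $M_1$ survives), applying Theorem \ref{Theorem for all characters} with $X/a^2$ and $rl$ in place of $X$ and $l$, summing the error term trivially with $\delta_\alpha=1/2-\Re(\alpha)$, and collapsing the $r$- and $a$-sums into the Euler products defining $B_{\pm\alpha}(l)$. The only cosmetic difference is that you verify the Euler-product identities directly, whereas the paper cites Young's computations for the same simplifications.
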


  The above result has been established by M. P. Young in \cite{Young1} under the slightly more restrictive conditions $\Re(\alpha) \ll (\log{X})^{-1}$ and $\Im(\alpha)  \ll (\log{X})^2$.  Here we point out that although one does not really need to apply any recursive argument to establish Theorem \ref{Theorem for primitive characters}, we find it useful to demonstrate the impact of various error terms in \eqref{Asymptotic for ratios of all characters} on the result given in \eqref{Asymptotic for ratios of primitive characters}. Moreover, this type of argument may have potential applications to other problems. For this reason, we give a proof of Theorem \ref{Theorem for primitive characters} via a recursive argument to establish the following result.
\begin{theorem}
\label{theo:recursive}
 With the notation as above, suppose that \eqref{Asymptotic for ratios of all characters} holds but with an error term of size $l^{1/2 + \varepsilon} X^{\delta + \varepsilon}$ for some $0< \delta \leq 1/2$ and any $\Re(\alpha) \ll (\log{X})^{-1}$, $\Im(\alpha) \ll X^{\varepsilon}$.  Further assume that \eqref{Asymptotic for ratios of primitive characters} holds but with an $O$-term of size $l^{1/2 + \varepsilon} X^{f+ \varepsilon}$ for some $1/2 \leq f \leq 1$ and any $\Re(\alpha) \ll (\log{X})^{-1}$, $\Im(\alpha) \ll X^{\varepsilon}$.  Then \eqref{Asymptotic for ratios of primitive characters}  holds with an $O$-term of size $l^{1/2 + \varepsilon} X^{1/2 + \varepsilon}$.
\end{theorem}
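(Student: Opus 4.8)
The plan is to exploit the decomposition $M_1+M_2$ of the left-hand side of \eqref{Asymptotic for ratios of primitive characters} set up in \eqref{M1M2}, \eqref{M1simplified} and \eqref{M2simplifiedc}, with the splitting parameter chosen to be $Y=X^{1/2}$, estimating $M_1$ by the assumed form of \eqref{Asymptotic for ratios of all characters} and $M_2$ by the assumed form of \eqref{Asymptotic for ratios of primitive characters}, and then adding. The value $Y=X^{1/2}$ is essentially forced: the errors contributed by the two inputs will be of sizes $l^{1/2+\varepsilon}X^{\delta+\varepsilon}Y^{\max(0,\,1-2\delta)+\varepsilon}$ and $l^{1/2+\varepsilon}X^{f+\varepsilon}Y^{1-2f+\varepsilon}$, and for $0<\delta\le\tfrac12\le f\le1$ the first is $\ll l^{1/2+\varepsilon}X^{1/2+\varepsilon}$ exactly when $Y\le X^{1/2}$ and the second exactly when $Y\ge X^{1/2}$. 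For brevity put $S(Z,m)=\sumstar_{(d,2)=1}L(\tfrac12+\alpha,\chi^{(8d)})\chi^{(8d)}(m)w(d/Z)$, the left side of \eqref{Asymptotic for ratios of primitive characters} (so the target is $S(X,l)$), and let $T(Z,m)$ and $MT(Z,m)$ denote the two explicit main terms on the right of \eqref{Asymptotic for ratios of all characters} and of \eqref{Asymptotic for ratios of primitive characters} respectively, with $X,l$ there replaced by $Z,m$. In the range $\Re(\alpha)\ll(\log X)^{-1}$, $\Im(\alpha)\ll X^{\varepsilon}$, for $Z\le X$ and $m$ odd and square-free one has the crude bounds $T(Z,m),MT(Z,m)\ll Z^{1+\varepsilon}m^{-1/2+\varepsilon}$ (the apparent poles at $\alpha=0$ of the two constituent terms of each cancel, and the Mellin transforms, the factors $\zeta^{(2)}(1\pm2\alpha)$, $\gamma_\alpha$ and the finite products over $p\mid m$ are all $\ll(Zm)^{\varepsilon}$).

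The single ingredient that is not mechanical is the identity
\[
 MT(Z,m)=\sum_{(b,2m)=1}\mu(b)\sum_{s\mid b}\frac{\mu(s)}{s^{1/2+\alpha}}\,T\!\Big(\frac{Z}{b^{2}},\,sm\Big)\qquad(m\ \text{odd square-free}),
\]
the series converging absolutely by the crude bound on $T$. This is the main-term incarnation of the exact relation obtained from the M\"obius sieve $\sumstar_{(d,2)=1}g(d)=\sum_{(b,2)=1}\mu(b)\sum_{(d,2)=1}g(db^{2})$ together with $L(\tfrac12+\alpha,\chi^{(8db^{2})})=L(\tfrac12+\alpha,\chi^{(8d)})\sum_{s\mid b}\mu(s)\chi^{(8d)}(s)s^{-1/2-\alpha}$ and $\chi^{(8db^{2})}(m)=\chi^{(8d)}(m)$ for $(b,m)=1$, which is precisely the computation carried out from \eqref{squarefreetoall} to \eqref{Lrel}. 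I would prove it either by invoking uniqueness of the polar part of the double Dirichlet series already used to establish Theorems \ref{Theorem for all characters} and \ref{Theorem for primitive characters}, or directly by checking, Euler factor by Euler factor, that the ``$Z$''-parts and the ``$Z^{1-\alpha}$''-parts of the two sides coincide; the latter is routine and uses $\zeta^{(2)}(1+2\alpha)B_\alpha(m)=\sum_{(n,2)=1}n^{-1-2\alpha}\prod_{p\mid nm}(1+p^{-1})^{-1}$.

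For $M_1$ I note that the innermost sum over $d$ in \eqref{M1simplified} is the left side of \eqref{Asymptotic for ratios of all characters} with $X$ replaced by $X/a^{2}$ and $l$ by $rl$ (an odd square-free number, as $r\mid a$, $\mu(r)\ne0$, $(a,2l)=1$); it is empty for $a>X^{1/2}$, and on the $O(1)$-many scales $X/a^{2}=O(1)$ the assumed formula is at least a valid upper bound. Feeding it in termwise, and absorbing any polynomial dependence on $\alpha$ into $X^{\varepsilon}$ (uniformly over $Z=X/a^{2}\le X$ since $\Im(\alpha)\ll X^{\varepsilon}$), gives
\[
 M_1=\sum_{\substack{a\le X^{1/2}\\(a,2l)=1}}\mu(a)\sum_{r\mid a}\frac{\mu(r)}{r^{1/2+\alpha}}\,T\!\Big(\frac{X}{a^{2}},rl\Big)+O\!\Big(l^{1/2+\varepsilon}X^{\delta+\varepsilon}\sum_{a\le X^{1/2}}a^{-2\delta+\varepsilon}\tau(a)\Big),
\]
where $\sum_{a\le X^{1/2}}a^{-2\delta+\varepsilon}\tau(a)\ll X^{\max(0,\,1/2-\delta)+\varepsilon}$ when $0<\delta\le\tfrac12$, so the $O$-term is $\ll l^{1/2+\varepsilon}X^{1/2+\varepsilon}$. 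By the identity above the leading sum equals $MT(X,l)$ minus its tail $\sum_{a>X^{1/2},\,(a,2l)=1}\mu(a)\sum_{r\mid a}\mu(r)r^{-1/2-\alpha}T(X/a^{2},rl)$, and this tail is $\ll l^{-1/2+\varepsilon}X^{1+\varepsilon}\sum_{a>X^{1/2}}a^{-2}\tau(a)^{2}\ll l^{1/2+\varepsilon}X^{1/2+\varepsilon}$. Hence $M_1=MT(X,l)+O(l^{1/2+\varepsilon}X^{1/2+\varepsilon})$.

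Finally, for $M_2$: in \eqref{M2simplifiedc} the factor $\sum_{a\mid c,\,a>Y}\mu(a)$ vanishes unless $c$ has a divisor exceeding $Y=X^{1/2}$, while the support of $w$ forces $c\ll X^{1/2}$, so $c\asymp X^{1/2}$, the scale $X/c^{2}$ is $\asymp1$, and the inner $d$-sum is $S(X/c^{2},lr)$ at this bounded scale. The assumed form of \eqref{Asymptotic for ratios of primitive characters} then gives $S(X/c^{2},lr)=MT(X/c^{2},lr)+O\big((lr)^{1/2+\varepsilon}(X/c^{2})^{f+\varepsilon}\big)\ll(lr)^{1/2+\varepsilon}$ (using $MT(X/c^{2},lr)\ll(lr)^{-1/2+\varepsilon}X^{\varepsilon}$ and $(X/c^{2})^{f+\varepsilon}\ll1$), whence $M_2\ll\sum_{c\asymp X^{1/2},\,(c,2l)=1}\tau(c)\sum_{r\mid c}r^{-1/2}(lr)^{1/2+\varepsilon}\ll l^{1/2+\varepsilon}\sum_{c\asymp X^{1/2}}\tau(c)^{2}\ll l^{1/2+\varepsilon}X^{1/2+\varepsilon}$. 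Adding the two estimates, $S(X,l)=M_1+M_2=MT(X,l)+O(l^{1/2+\varepsilon}X^{1/2+\varepsilon})$, i.e.\ \eqref{Asymptotic for ratios of primitive characters} with the claimed $O$-term. The real work is the displayed main-term identity and the accompanying bookkeeping of coprimality and of uniformity in $\alpha$ across the scales $Z\le X$; the rest is routine divisor-sum estimation. (If one keeps $Y<X^{1/2}$ general, the main term of $M_2$ is no longer negligible: one expands $MT(X/c^{2},rl)$ once more by the identity above and checks, via a M\"obius cancellation in \eqref{M2simplifiedc}, that $\sum_{(c,2l)=1}\big(\sum_{a\mid c,\,a>Y}\mu(a)\big)\sum_{r\mid c}\mu(r)r^{-1/2-\alpha}MT(X/c^{2},rl)$ is exactly the tail removed in the $M_1$ step, after which balancing the errors again forces $Y=X^{1/2}$.)
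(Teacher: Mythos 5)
Your proposal is correct and follows essentially the same route as the paper: the identical $M_1+M_2$ decomposition from \eqref{M1M2}--\eqref{M2simplifiedc} with $Y=X^{1/2}$, the hypothesis on \eqref{Asymptotic for ratios of all characters} fed termwise into \eqref{M1simplified}, and the same key main-term identity (the content of the paper's \eqref{maintermss1}--\eqref{maintermss2}, verified there via Young's Euler-product computations) asserting that completing the $a$-sum of the all-moduli main terms reproduces the main terms of \eqref{Asymptotic for ratios of primitive characters}. The only organizational difference is that the paper keeps $Y$ general and matches the main terms $M_{21}$, $M_{22}$ extracted from $M_2$ against the truncated $a$-sum from $M_1$, whereas you fix $Y=X^{1/2}$ at the outset so that both the completion tail and all of $M_2$ are absorbed wholesale into the error $O(l^{1/2+\varepsilon}X^{1/2+\varepsilon})$ --- a cosmetic reorganization of the same argument.
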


   It follows from Theorem \ref{Theorem for primitive characters} that one can take $\delta=1/2$ in Theorem \ref{theo:recursive}. As pointed out on \cite[p. 81]{Young1}, one can also take $f=1$ in Theorem \ref{theo:recursive} due to a second moment result of Jutila \cite{Jutila}. Thus we deduce from Theorem \eqref{theo:recursive} that Theorem \ref{Theorem for primitive characters} is valid. We note here that Theorem \ref{theo:recursive} illustrates an interesting phenomenon that an improvement on the error term for the power of $X$ in \eqref{Asymptotic for ratios of all characters} does not necessarily lead to an improvement on the error term in \eqref{Asymptotic for ratios of primitive characters}. \newline

  As the error term in \eqref{Asymptotic for ratios of all characters} is uniform in $\alpha$, we set $l=1$ and take the limit as $\alpha \rightarrow 0$ to recovers the main result in \cite{Young1} that evaluates asymptotically the first moment of central values of the family of primitive quadratic Dirichlet $L$-functions.
\begin{corollary}
\label{Thmfirstmomentatcentral}
	With the notation as above, we have,
\begin{align*}
%%\label{Asymptotic for first moment at central}
\begin{split}			
	& 	\sumstar_{\substack{(d,2)=1}}L(\tfrac{1}{2}, \chi^{(8d)}) w \bfrac {d}X  = XQ(\log X)+O\lz  X^{1/2+\varepsilon}\pz.
\end{split}
\end{align*}
  where $Q$ is a linear polynomial whose coefficients depend only on $\M w(1)$ and $\M w'(1)$.
\end{corollary}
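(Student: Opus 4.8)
The plan is to deduce Corollary \ref{Thmfirstmomentatcentral} directly from Theorem \ref{Theorem for all characters} by specializing $l = 1$ and taking the limit $\alpha \to 0^+$, carefully extracting the polynomial $Q$ from the confluence of the two main terms. First I would note that, after setting $l = 1$, the error term in \eqref{Asymptotic for ratios of all characters} with $\delta_\alpha = 1/2 - \Re(\alpha)$ is of size $O((1 + |\alpha|)^{2+\varepsilon} X^{1/2 - \Re(\alpha) + \varepsilon})$, which, since we only need the untwisted first moment of primitive characters, should first be converted to the statement of Theorem \ref{Theorem for primitive characters} at $l = 1$: for $\Re(\alpha) \ll (\log X)^{-1}$ and $\Im(\alpha) \ll X^\varepsilon$, one has
\begin{align*}
\sumstar_{(d,2)=1} L(\tfrac12 + \alpha, \chi^{(8d)}) w\bfrac{d}{X}
= \frac{X \M w(1)}{2\zeta^{(2)}(2)} \zeta^{(2)}(1 + 2\alpha) B_\alpha(1)
+ \frac{X^{1-\alpha} \M w(1-\alpha) \gamma_\alpha}{2\zeta^{(2)}(2)} \zeta^{(2)}(1 - 2\alpha) B_{-\alpha}(1) + O(X^{1/2 + \varepsilon}),
\end{align*}
where $B_\alpha(1) = \big(\zeta^{(2)}(1+2\alpha)\big)^{-1} \sum_{(n,2)=1} n^{-1-2\alpha} \prod_{p \mid n}(1 + p^{-1})^{-1}$. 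Since the error term here is already $O(X^{1/2+\varepsilon})$ uniformly in the stated range of $\alpha$, and in particular uniformly for $\alpha$ in a small complex neighborhood of $0$ of radius, say, $1/\log X$, I can afford to let $\alpha \to 0$.

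The next step is to analyze the main term near $\alpha = 0$. Both $\zeta^{(2)}(1 + 2\alpha)$ and $\zeta^{(2)}(1 - 2\alpha)$ have simple poles at $\alpha = 0$ coming from the pole of $\zeta(s)$ at $s = 1$, but the combination $B_\alpha(1) \zeta^{(2)}(1+2\alpha)$ is the Dirichlet series $\sum_{(n,2)=1} n^{-1-2\alpha}\prod_{p\mid n}(1+p^{-1})^{-1}$, which is itself holomorphic at $\alpha = 0$ except for a simple pole — indeed this series factors as $\zeta(1+2\alpha)$ times an Euler product convergent and holomorphic near $\alpha = 0$. So each of the two terms $X^{\pm\text{(something)}}\cdot(\text{pole in }\alpha)$ individually blows up as $\alpha \to 0$, but their sum is finite: this is the standard mechanism whereby two terms with a simple pole of opposite residues combine, the singular parts cancel, and the finite part produces a polynomial in $\log X$ of degree one less than if there had been a double pole. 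Concretely, I would write $g(\alpha) := \frac{1}{2\zeta^{(2)}(2)} \sum_{(n,2)=1} n^{-1-2\alpha}\prod_{p\mid n}(1+p^{-1})^{-1}$, which has a simple pole at $\alpha = 0$ with some residue $c$ and finite part; the first main term is $X \M w(1) g(\alpha)$ and the second is $X^{1-\alpha} \M w(1-\alpha) \gamma_\alpha g(-\alpha)$ (using $\gamma_{-\alpha} = \gamma_\alpha^{-1}$ appropriately — actually $\gamma_\alpha \to 1$ as $\alpha \to 0$). Expanding $X^{1-\alpha} = X \cdot e^{-\alpha \log X} = X(1 - \alpha \log X + O(\alpha^2 (\log X)^2))$, $\M w(1-\alpha) = \M w(1) - \alpha \M w'(1) + O(\alpha^2)$, $\gamma_\alpha = 1 + O(\alpha)$, and $g(\pm\alpha) = \pm\frac{c}{2\alpha} \cdot(\text{wait, } g \text{ has pole})$ — more precisely $g(\alpha) = \frac{c}{\alpha} + c_0 + O(\alpha)$ and $g(-\alpha) = -\frac{c}{\alpha} + c_0 + O(\alpha)$ near $\alpha = 0$, the sum $X\M w(1) g(\alpha) + X^{1-\alpha}\M w(1-\alpha)\gamma_\alpha g(-\alpha)$ has its $\alpha^{-1}$ terms cancel, leaving $X$ times a polynomial of degree $1$ in $\log X$ — the $\log X$ coefficient arising precisely from $-\alpha\log X \cdot (-c/\alpha) = c\log X$.

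Then I would make this rigorous by the standard contour-integral / removable-singularity argument: since the left-hand side is (by Theorem \ref{Theorem for primitive characters} with $l=1$, valid in a complex neighborhood of $\alpha = 0$) the sum of the two main terms plus $O(X^{1/2+\varepsilon})$, and the left-hand side at $\alpha = 0$ is exactly $\sumstar_{(d,2)=1} L(\tfrac12, \chi^{(8d)}) w(d/X)$ — a quantity manifestly independent of $\alpha$ — the apparent singularity of the combined main term at $\alpha = 0$ must be removable. Evaluating the holomorphic extension at $\alpha = 0$ (equivalently, taking $\lim_{\alpha \to 0^+}$, which exists), I obtain $X Q(\log X) + O(X^{1/2+\varepsilon})$ where $Q$ is the linear polynomial whose leading coefficient is $c = \operatorname{Res}_{\alpha = 0} g(\alpha)$ times $\M w(1)$-type data and whose constant coefficient involves both $\M w(1)$ and $\M w'(1)$ (the latter entering through the expansion of $\M w(1-\alpha)$); one checks $Q$'s coefficients depend only on $\M w(1)$ and $\M w'(1)$ as claimed. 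The main obstacle — really the only subtlety — is justifying that the limit can be taken inside the asymptotic, i.e., that the $O(X^{1/2+\varepsilon})$ error term is uniform in a full complex neighborhood of $\alpha = 0$ rather than merely along the positive real axis; this is exactly why Theorem \ref{Theorem for all characters} was proved with uniformity in $\alpha$ (including complex $\alpha$ with small real part), and it is the point that makes the limiting argument legitimate. Everything else is the routine bookkeeping of a Taylor expansion around a simple pole.
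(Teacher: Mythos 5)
Your proposal is correct and follows essentially the same route as the paper: specialize Theorem \ref{Theorem for primitive characters} (itself a consequence of the uniform-in-$\alpha$ estimate of Theorem \ref{Theorem for all characters}) to $l=1$ and let $\alpha \to 0^{+}$, with the simple poles of the two main terms cancelling so that the holomorphic (removable-singularity) value is $XQ(\log X)$ with $Q$ linear and its coefficients built from $\M w(1)$ and $\M w'(1)$, the uniformity of the $O(X^{1/2+\varepsilon})$ error for $\Re(\alpha)\ll(\log X)^{-1}$ justifying the passage to the limit. You merely fill in the Taylor-expansion bookkeeping that the paper leaves implicit, and your computation of the $\log X$ coefficient from $(-\alpha\log X)\cdot(-c/\alpha)$ is the right mechanism.
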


  We omit the explicit expression of $Q$ here since our main focus is the error term.

\section{Preliminaries}
\label{sec 2}

\subsection{Functional equations for Dirichlet $L$-functions}
\label{sec 2.1}

  We recall that $\chi_n=\left(\frac {\cdot}{n} \right)$ for any odd positive integer $n$. We also write $\psi_j=\chi^{(4j)}$ for $j = \pm 1, \pm 2$ where we recall that $\chi^{(d)}=\leg {d}{\cdot} $ is the Kronecker symbol for integers $d \equiv 0, 1 \pmod 4$. Note that each $\psi_j$ is a character modulo $4|j|$.  Let $\psi_0$ stand for the primitive principal character. \newline

  For any Dirichlet character $\chi$ modulo $n$ and any integer $q$, recall that the usual Gauss sum $\tau(\chi,q)$ is defined as
\begin{equation*}
%%\label{key}
		\tau(\chi,q)=\sum_{j\shortmod n}\chi(j)e \left( \frac {jq}n \right), \quad \mbox{where} \quad  e(z)= \exp(2 \pi i z).
\end{equation*}
	
  We write
\begin{equation*}
%%\label{key}
		\Ge(s)=\frac{\Gamma\bfrac{1-s}{2}}{\Gamma\bfrac s2} \quad \mbox{and} \quad
		\Go(s)=\frac{\Gamma\bfrac{2-s}{2}}{\Gamma\bfrac {s+1}2}.
\end{equation*}
   We further define $\Gamma_{e/o}(s;\chi)$ for any Dirichlet character $\chi$ so that
\begin{equation*}
%%\label{key}
			\Gamma_{e/o}(s;\chi)=\begin{cases}
				\Gamma_{e}(s),&\hbox{if $\chi(-1)=1$},\\
				\Gamma_{o}(s),&\hbox{if $\chi(-1)=-1$.}
			\end{cases}
\end{equation*}

	We quote the following functional equation from \cite[Proposition 2.3]{Cech1} for all Dirichlet characters $\chi$ modulo $n$, which plays a crucial role in the proof of Theorem \ref{Theorem for all characters}.
\begin{lemma}
\label{Functional equation with Gauss sums}
		With the notation as above, we have for any non-principal Dirichlet character $\chi$,
\begin{equation*}
%%\label{Equation functional equation with Gauss sums}
			L(s,\chi)=\epsilon(\chi)\frac{\pi^{s-1/2}}{n^s}\Gamma_{e/o}(s;\chi)  K(1-s,\chi),
\end{equation*}
where
\begin{equation*}
%%\label{key}
		K(s,\chi)=\sum_{q=1}^\infty\frac{\tau(\chi,q)}{q^s} \quad \mbox{and} \quad	\epsilon(\chi)=\begin{cases}
				1,& \hbox{if $\chi(-1)=1$},\\
				-i,&\hbox{if $\chi(-1)=-1$.}
			\end{cases}
\end{equation*}
\end{lemma}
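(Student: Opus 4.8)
The lemma is the classical functional equation for Dirichlet $L$-functions, written so as to be valid for all (not necessarily primitive) characters modulo $n$, with the dual side packaged as the Gauss-sum series $K(s,\chi)$. Since the paper merely quotes it from \cite[Proposition 2.3]{Cech1}, what follows is a sketch of how I would prove it from scratch. The plan is to read it off from Hurwitz's formula for the Hurwitz zeta function $\zeta(s,a)=\sum_{m\ge 0}(m+a)^{-s}$, which already contains exactly the exponential sums that assemble into $K$. First I would expand, for $\Re(s)>1$,
\[
 L(s,\chi)=\sum_{j=1}^{n}\chi(j)\sum_{m\ge 0}(mn+j)^{-s}=n^{-s}\sum_{j=1}^{n}\chi(j)\,\zeta\!\left(s,\tfrac{j}{n}\right),
\]
so that the whole problem reduces to substituting the functional equation of $\zeta(s,a)$ and bookkeeping the result.

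Next I would invoke Hurwitz's formula in the shape
\[
 \zeta(s,a)=\frac{\Gamma(1-s)}{(2\pi)^{1-s}}\Big(e^{-\pi i(1-s)/2}F(a,1-s)+e^{\pi i(1-s)/2}F(-a,1-s)\Big),\qquad F(x,w)=\sum_{k\ge 1}\frac{e(kx)}{k^{w}},
\]
an identity of meromorphic functions in $s$ for each fixed $0<a\le 1$. Taking $a=j/n$, multiplying by $\chi(j)$ and summing over $j$, the key point is that $\sum_{j\bmod n}\chi(j)e(\pm kj/n)=\tau(\chi,\pm k)$ and $\tau(\chi,-k)=\chi(-1)\tau(\chi,k)$; hence, after interchanging the finite $j$-sum with the $k$-sum, both pieces collapse onto $K(1-s,\chi)=\sum_{k\ge 1}\tau(\chi,k)k^{s-1}$, leaving
\[
 L(s,\chi)=n^{-s}\,\frac{\Gamma(1-s)}{(2\pi)^{1-s}}\Big(e^{-\pi i(1-s)/2}+\chi(-1)e^{\pi i(1-s)/2}\Big)K(1-s,\chi).
\]
To make the interchange legitimate I would first work in the strip $\Re(s)<0$, where each $F(j/n,1-s)$ converges absolutely and $L(s,\chi)$ is read through the analytic continuation of the terms $\zeta(s,j/n)$; since $\tau(\chi,\cdot)$ is $n$-periodic and $\sum_{a=1}^{n}\tau(\chi,a)=0$ for non-principal $\chi$, the series $K(s,\chi)$ is a finite combination of Hurwitz zeta functions, hence entire, and the identity then propagates to all $s$ by analytic continuation.

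It remains to simplify the archimedean factor case by case. When $\chi(-1)=1$ the bracket equals $2\sin(\pi s/2)$ and one sets $\epsilon(\chi)=1$; when $\chi(-1)=-1$ it equals $-2i\cos(\pi s/2)$ and one sets $\epsilon(\chi)=-i$. Writing $2(2\pi)^{-(1-s)}=2^{s}\pi^{s-1}$ and combining the Legendre duplication formula $\Gamma(\tfrac{1-s}{2})\Gamma(1-\tfrac s2)=2^{s}\sqrt{\pi}\,\Gamma(1-s)$ with Euler's reflection formula, one checks the two identities
\[
 2^{s}\Gamma(1-s)\sin\!\big(\tfrac{\pi s}{2}\big)=\sqrt{\pi}\,\frac{\Gamma(\tfrac{1-s}{2})}{\Gamma(\tfrac s2)}=\sqrt{\pi}\,\Ge(s),\qquad 2^{s}\Gamma(1-s)\cos\!\big(\tfrac{\pi s}{2}\big)=\sqrt{\pi}\,\frac{\Gamma(\tfrac{2-s}{2})}{\Gamma(\tfrac{s+1}{2})}=\sqrt{\pi}\,\Go(s),
\]
so that the prefactor becomes $\epsilon(\chi)\,\pi^{s-1/2}n^{-s}\Gamma_{e/o}(s;\chi)$, which is precisely the assertion of the lemma.

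There is no genuine conceptual obstacle here; the work is bookkeeping, and the two points that demand care are (i) fixing the branch of $e^{\pm\pi i(1-s)/2}$, and hence the sign conventions that produce $\epsilon(\chi)\in\{1,-i\}$, and (ii) the parity split together with the two $\Gamma$-function identities above, which must be matched exactly against the definitions of $\Ge$ and $\Go$. An alternative would be to reduce to the classical functional equation of the primitive character $\chi^{*}$ inducing $\chi$ and re-expand $\prod_{p\mid n}(1-\chi^{*}(p)p^{-(1-s)})\,L(1-s,\overline{\chi^{*}})$ as a Dirichlet series with coefficients $\tau(\chi,k)$; but that route forces an explicit evaluation of the imprimitive Gauss sums $\tau(\chi,k)$, whereas the Hurwitz-formula argument produces them automatically, so I would prefer the latter.
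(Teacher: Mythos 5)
Your derivation is correct, and it fills a gap the paper deliberately leaves open: the authors give no proof at all, quoting the statement from \cite[Proposition 2.3]{Cech1}. Your route — writing $L(s,\chi)=n^{-s}\sum_{j=1}^{n}\chi(j)\zeta(s,j/n)$, applying Hurwitz's formula, using $\tau(\chi,-k)=\chi(-1)\tau(\chi,k)$ to collapse both exponential-sum pieces onto $K(1-s,\chi)$, and then converting $2^{s}\Gamma(1-s)\sin(\pi s/2)$ and $2^{s}\Gamma(1-s)\cos(\pi s/2)$ into the ratios $\Gamma(\tfrac{1-s}{2})/\Gamma(\tfrac{s}{2})$ and $\Gamma(\tfrac{2-s}{2})/\Gamma(\tfrac{s+1}{2})$ via reflection and duplication — is exactly the standard proof of the cited proposition, and all the constants check out (in particular the bracket $e^{-\pi i(1-s)/2}+\chi(-1)e^{\pi i(1-s)/2}$ does give $2\sin(\pi s/2)$, resp.\ $-2i\cos(\pi s/2)$, matching $\epsilon(\chi)\in\{1,-i\}$). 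You also correctly handle the two convergence points that a careless write-up would miss: the interchange of the $j$- and $k$-sums is justified in $\Re(s)<0$, and $K(s,\chi)$ is entire for non-principal $\chi$ because it is a finite linear combination of Hurwitz zeta functions whose residues at $s=1$ cancel (the period sums of $\tau(\chi,\cdot)$ vanish), so the identity extends to all $s$ by continuation, which is how the lemma must be read when it is later applied near $s=1/2$. I have nothing to correct.
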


     Note that if $\chi$ is a primitive character modulo $n$, then (see \cite[\S 9, (2)]{Da})
\begin{equation*}
%%\label{Gauss sums for primitive characters}
		\tau(\chi,q)=\bar\chi(q)\tau(\chi,1).
\end{equation*}
Therefore, similar to a Dirichlet $L$-functions, $K(s,\chi)$ converges absolutely for $\Re (s) > \tfrac12$ and admits analytic continuation to $\comc$.   Further, it is well-known (see \cite[\S 2]{Da})  that $\epsilon(\chi)\tau(\chi,1)=n^{1/2}$ in this case, so that Lemma \ref{Functional equation with Gauss sums}
implies that for such $\chi$,
\begin{equation}
\label{fcneqnprimitive}
			L(s,\chi)=\frac{\pi^{s-1/2}}{n^{s-1/2}}\Gamma_{e/o}(s;\chi)L(1-s,\bar\chi).
\end{equation}

    We shall apply the above lemma to the case when $\chi=\chi^{(4)}\chi_n$ for any odd positive integer $n$ not a perfect square (henceforth written as $n \neq \square$). Before proceeding any further, we first note that  for two Dirichlet characters $\chi_i$ modulo $n_i, i=1,2$ with $(n_1, n_2)=1$, then for $\chi_1\chi_2$ considered as a Dirichlet character modulo $n_1n_2$, the Chinese Remainder Theorem gives that (see \cite[Lemma 2.1]{Cech1})
\begin{equation*}
%%\label{key}
		\tau(\chi_1 \chi_2,q)=\chi_1(n_2)\chi_2(n_1)\tau(\chi_1,q)\tau(\chi_2,q).
\end{equation*}

    Moreover, direct calculation renders
\begin{equation*}
%%\label{key}
		\tau\lz\chi^{(4)},q\pz=\begin{cases}
					0,&\hbox{if $q$ is odd,}\\
					-2,&\hbox{if $q\equiv2\mod 4$,}\\
					2,&\hbox{if $q\equiv0\mod 4$.}
				\end{cases}
\end{equation*}

   It follows from the above that for any odd positive integer $n \neq \square$,
\begin{equation}
\label{tauvalue}
		\tau(\chi^{(4)}\chi_n,q)=\tau(\chi^{(4)},q)\tau(\chi_n,q)=\begin{cases}
					0,&\hbox{if $q$ is odd,}\\
					-2\tau(\chi_n,q),&\hbox{if $q\equiv2\pmod 4$,}\\
					2\tau(\chi_n,q),&\hbox{if $q\equiv0\pmod 4$.}
				\end{cases}
\end{equation}

   For the purpose of this paper, we also define an associated Gauss sum $G\lz\chi_n,q\pz$ by
\begin{align}
\label{Gdef}
\begin{split}
			G\lz\chi_n,q\pz&=\lz\frac{1-i}{2}+\leg{-1}{n}\frac{1+i}{2}\pz\tau\lz\chi_n,q\pz=\begin{cases}
				\tau\lz\chi_n,q\pz,&\hbox{if $n\equiv1 \pmod 4$,}\\
				-i\tau\lz\chi_n,q\pz,&\hbox{if $n\equiv3\pmod 4$}.
			\end{cases}
\end{split}
\end{align}

  The advantage of $G\lz\chi_n,q\pz$ over $\tau\lz\chi_n,q\pz$ is that $G\lz\chi_n,q\pz$ is now a multiplicative function of $n$. In fact, writing $\varphi(m)$ for the Euler totient function of $m$, emerges the following evaluation of $G\lz\chi_n,q\pz$ from \cite[Lemma 2.3]{sound1}.
\begin{lemma}
\label{lem:Gauss}
   If $(m,n)=1$, then $G(\chi_{mn},q)=G(\chi_m,q)G(\chi_n,q)$. Suppose that $p^a$ is
   the largest power of $p$ dividing $q$ (put $a=\infty$ if $q=0$).
   Then for $k \geq 0$,
\begin{equation*}
%%\label{Sound's Gauss sums - exact formula}
		G\lz\chi_{p^k},q\pz=\begin{cases}\varphi(p^k),&\hbox{if $k\leq a$, $k$ even,}\\
			0,&\hbox{if $k\leq a$, $k$ odd,}\\
			-p^a,&\hbox{if $k=a+1$, $k$ even,}\\
			\leg{qp^{-a}}{p}p^{a}\sqrt p,&\hbox{if $k=a+1$, $k$ odd,}\\
			0,&\hbox{if $k\geq a+2$}.
		\end{cases}
\end{equation*}
\end{lemma}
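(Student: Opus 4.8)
The plan is to prove the two parts of the lemma in turn, both by elementary manipulations of Gauss sums. Recall from \eqref{Gdef} that $G(\chi_n,q) = \varepsilon_n\,\tau(\chi_n,q)$, where $\varepsilon_n := \tfrac{1-i}{2} + \leg{-1}{n}\tfrac{1+i}{2}$ equals $1$ when $n \equiv 1 \pmod 4$ and $-i$ when $n \equiv 3 \pmod 4$. For multiplicativity, assume $(m,n)=1$ and parametrize the residues $j$ modulo $mn$ by $j \equiv an + bm \pmod{mn}$, where $a$ runs over residues modulo $m$ and $b$ over residues modulo $n$; this is a bijection since $(m,n)=1$. Then $e(jq/mn) = e(aq/m)\,e(bq/n)$, while the Jacobi symbol is multiplicative in its numerator and $an+bm \equiv an \pmod m$, $an+bm \equiv bm \pmod n$, so $\leg{an+bm}{m} = \leg{n}{m}\leg{a}{m}$ and $\leg{an+bm}{n} = \leg{m}{n}\leg{b}{n}$. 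Summing, $\tau(\chi_{mn},q) = \leg{m}{n}\leg{n}{m}\,\tau(\chi_m,q)\,\tau(\chi_n,q)$, and quadratic reciprocity for the Jacobi symbol gives $\leg{m}{n}\leg{n}{m} = (-1)^{\frac{m-1}{2}\cdot\frac{n-1}{2}}$. A check of the four residue classes of $(m,n)$ modulo $4$ shows $\varepsilon_{mn}\,(-1)^{\frac{m-1}{2}\cdot\frac{n-1}{2}} = \varepsilon_m\varepsilon_n$, and combining these identities yields $G(\chi_{mn},q) = G(\chi_m,q)\,G(\chi_n,q)$.

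For the local evaluation at $n = p^k$ with $p$ odd, the key point is $\leg{j}{p^k} = \leg{j}{p}^k$, which separates the two parities of $k$. When $k$ is even, $\chi_{p^k}$ is the indicator of $(j,p)=1$ regarded modulo $p^k$, so $\tau(\chi_{p^k},q)$ is the Ramanujan sum $c_{p^k}(q) = \sum_{d\mid(p^k,q)} d\,\mu(p^k/d)$; since $p^k \equiv 1 \pmod 4$ we have $\varepsilon_{p^k}=1$, hence $G(\chi_{p^k},q)=c_{p^k}(q)$, and as only $d = p^k$ and $d = p^{k-1}$ survive the factor $\mu(p^k/d)$, this equals $\varphi(p^k)$, $-p^a$, or $0$ according as $a \geq k$, $a = k-1$, or $a \leq k-2$ --- the stated even cases, the degenerate $q = 0$ (read $a = \infty$) giving $\varphi(p^k)$. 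When $k$ is odd, $\leg{j}{p^k}$ depends only on $j$ modulo $p$, so writing $j = j_1 + p j_2$ with $j_1$ modulo $p$ and $j_2$ modulo $p^{k-1}$ factors $\tau(\chi_{p^k},q)$ as $\big(\sum_{j_1 \bmod p}\leg{j_1}{p}e(j_1 q/p^k)\big)\big(\sum_{j_2 \bmod p^{k-1}}e(j_2 q/p^{k-1})\big)$. The second factor is $p^{k-1}$ if $p^{k-1}\mid q$ and $0$ otherwise; when $p^{k-1}\mid q$, say $q = p^{k-1}q'$, the first factor equals $\tau(\chi_p,q') = \leg{q'}{p}\,\tau(\chi_p,1)$. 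Using the classical evaluation, so that $\varepsilon_p\,\tau(\chi_p,1)=\sqrt p$ regardless of $p \bmod 4$, together with $\varepsilon_{p^k}=\varepsilon_p$ for odd $k$, one gets $G(\chi_{p^k},q) = \leg{q'}{p}\,p^{k-1}\sqrt p$ whenever $p^{k-1}\mid q$; this is nonzero exactly when $p\nmid q'$, i.e. $k = a+1$ (giving $\leg{qp^{-a}}{p}\,p^a\sqrt p$ after substituting $q' = qp^{-a}$), and it is $0$ when $p\mid q'$, i.e. $k\leq a$, while $G(\chi_{p^k},q)=0$ when $p^{k-1}\nmid q$, i.e. $k\geq a+2$ --- matching the remaining cases, $q=0$ included.

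The only genuinely delicate point is the bookkeeping: in the multiplicativity step one has to balance the normalizing factor $\varepsilon_n$ against the reciprocity sign, and in the odd-$k$ computation one has to line up the power of $p$ and the argument $qp^{-a}$ of the Legendre symbol with the asserted formula. Since the statement is \cite[Lemma 2.3]{sound1} verbatim, one may alternatively just cite it; the above records the short self-contained route.
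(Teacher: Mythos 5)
Your proof is correct. Note that the paper itself offers no proof of this lemma: it is imported verbatim from Soundararajan (\cite[Lemma 2.3]{sound1}), so there is no internal argument to compare against; your write-up supplies the standard self-contained derivation, and it is along the same lines as the source being cited. The details check out: in the multiplicativity step, the CRT parametrization $j\equiv an+bm$ gives $\tau(\chi_{mn},q)=\leg{m}{n}\leg{n}{m}\tau(\chi_m,q)\tau(\chi_n,q)$, and the identity $\varepsilon_{mn}(-1)^{\frac{m-1}{2}\cdot\frac{n-1}{2}}=\varepsilon_m\varepsilon_n$ indeed holds in all four residue classes of $(m,n)$ modulo $4$ (the case $m\equiv n\equiv 3$ giving $1\cdot(-1)=(-i)(-i)$), which is exactly the point of the normalization \eqref{Gdef}. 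For even $k$ the identification of $\tau(\chi_{p^k},q)$ with the Ramanujan sum $c_{p^k}(q)$, together with $\varepsilon_{p^k}=1$, yields the three even-index cases, and for odd $k$ the splitting $j=j_1+pj_2$ with the classical evaluation $\varepsilon_p\tau(\chi_p,1)=\sqrt{p}$ (valid for both $p\equiv 1$ and $p\equiv 3 \pmod 4$, which is again what the factor $\varepsilon_p$ is designed to achieve) gives $\leg{qp^{-a}}{p}p^a\sqrt{p}$ precisely when $k=a+1$ and $0$ otherwise, including the degenerate $q=0$ case (the statement's ``$a=\infty$ if $m=0$'' should of course read $q=0$, as your treatment implicitly assumes). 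So either route --- your direct computation or the bare citation --- is acceptable; yours has the merit of making the role of the factor $\lz\frac{1-i}{2}+\leg{-1}{n}\frac{1+i}{2}\pz$ transparent.
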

	
   We deduce from Lemma \ref{Functional equation with Gauss sums}, \eqref{tauvalue} and \eqref{Gdef} the following functional equation concerning $L(s,\chi^{(4)}\chi_n)$ for any odd $n \neq \square$, treating $\chi^{(4)}\chi_n$ as a Dirichlet character modulo $4n$.
\begin{corollary}
\label{Functional equation for quadLfcn}
		With the notations as above, we have, for any odd $n \neq \square$,
\begin{equation*}
%%\label{Equation functional equation quadLfcn}
			L(s,\chi^{(4)}\chi_n)=-\frac{2\pi^{s-1/2}}{(4n)^s}\Gamma_{e/o}(s;\chi_n) \sum_{\substack{q\equiv2\pmod 4}}^\infty\frac{G(\chi_n,q)}{q^{1-s}}+\frac{2\pi^{s-1/2}}{(4n)^s}\Gamma_{e/o}(s;\chi_n) \sum_{\substack{q\equiv 0 \pmod 4}}^\infty\frac{G(\chi_n,q)}{q^{1-s}}.
\end{equation*}
The sums on the right-hand side of the above converge absolutely for $\Re (s) < 0$.
\end{corollary}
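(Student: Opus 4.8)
The plan is to obtain the Corollary by specializing Lemma~\ref{Functional equation with Gauss sums} to the character $\chi=\chi^{(4)}\chi_n$, regarded modulo $4n$, and then rewriting the resulting Dirichlet series over the Gauss sums $\tau(\chi,q)$ as one over the normalized Gauss sums $G(\chi_n,q)$ by means of \eqref{tauvalue} and \eqref{Gdef}.

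First I would check that the hypotheses of Lemma~\ref{Functional equation with Gauss sums} are met. Since $n$ is odd, $(4,n)=1$, so $\chi^{(4)}\chi_n$ is a Dirichlet character modulo $4n$; because $\chi^{(4)}$ is the principal character modulo $4$, one has $(\chi^{(4)}\chi_n)(j)=\chi_n(j)$ whenever $(j,4n)=1$, so $\chi^{(4)}\chi_n$ is the principal character modulo $4n$ precisely when $\chi_n$ is principal modulo $n$, i.e.\ when $n=\square$, which is excluded by hypothesis. In particular $(\chi^{(4)}\chi_n)(-1)=\chi_n(-1)=\leg{-1}{n}$, so that $\Gamma_{e/o}(s;\chi^{(4)}\chi_n)=\Gamma_{e/o}(s;\chi_n)$, while $\epsilon(\chi^{(4)}\chi_n)$ equals $1$ if $n\equiv1\pmod 4$ and $-i$ if $n\equiv3\pmod 4$. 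Lemma~\ref{Functional equation with Gauss sums} applied with modulus $4n$ then yields
\begin{equation*}
L(s,\chi^{(4)}\chi_n)=\epsilon(\chi^{(4)}\chi_n)\frac{\pi^{s-1/2}}{(4n)^s}\Gamma_{e/o}(s;\chi_n)\sum_{q=1}^{\infty}\frac{\tau(\chi^{(4)}\chi_n,q)}{q^{1-s}}.
\end{equation*}

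Next I would insert \eqref{tauvalue}: the terms with $q$ odd vanish, while the summand equals $-2\tau(\chi_n,q)$ for $q\equiv2\pmod 4$ and $2\tau(\chi_n,q)$ for $q\equiv0\pmod 4$. The one point that needs care — and the reason the final formula carries no trace of $n$ modulo $4$ — is the interplay between $\epsilon(\chi^{(4)}\chi_n)$ and the factor from \eqref{Gdef} relating $\tau(\chi_n,q)$ to $G(\chi_n,q)$: by \eqref{Gdef} one has $\tau(\chi_n,q)=G(\chi_n,q)$ when $n\equiv1\pmod 4$ and $\tau(\chi_n,q)=iG(\chi_n,q)$ when $n\equiv3\pmod 4$, so in both cases $\epsilon(\chi^{(4)}\chi_n)\tau(\chi_n,q)=G(\chi_n,q)$. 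Making this substitution and separating the sum over $q$ into the ranges $q\equiv2\pmod 4$ and $q\equiv0\pmod 4$ gives exactly the asserted identity. Everything apart from this single cancellation is routine bookkeeping, and no convergence question beyond that already built into Lemma~\ref{Functional equation with Gauss sums} arises, since the series above is just $K(1-s,\chi^{(4)}\chi_n)$ in the notation of that lemma.
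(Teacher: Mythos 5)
Your proposal is correct and follows exactly the route the paper intends: specialize Lemma~\ref{Functional equation with Gauss sums} to $\chi=\chi^{(4)}\chi_n$ modulo $4n$, insert \eqref{tauvalue}, and absorb $\epsilon(\chi^{(4)}\chi_n)$ into the normalization \eqref{Gdef} so that $\epsilon(\chi^{(4)}\chi_n)\tau(\chi_n,q)=G(\chi_n,q)$ in both residue classes of $n$ modulo $4$. The verification that $\chi^{(4)}\chi_n$ is non-principal precisely because $n\neq\square$, and that $\Gamma_{e/o}(s;\chi^{(4)}\chi_n)=\Gamma_{e/o}(s;\chi_n)$, supplies the details the paper leaves implicit.
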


\subsection{Bounding $L$-functions}

  We cite the following large sieve result bounding $|L( s,  \chi)|$ on average for $\Re(s) \geq 1/2$ from \cite[Lemma 2.7]{G&Zhao20}, which is a consequence of \cite[Theorem 2]{DRHB}.
\begin{lemma} \label{lem:2.3}
 With the notation as above, let $S(X)$ denote the set of real characters $\chi$ with conductor not exceeding $X$. Then, for any $s \in \comc$ and $\varepsilon_0$, $\varepsilon>0$ with $\Re(s) \geq 1/2$, $|s-1|>\varepsilon_0$,
\begin{align}
\label{L1estimation}
\sum_{\substack{\chi \in S(X)}} |L(s, \chi)|
\ll & X^{1+\varepsilon} |s|^{1/4+\varepsilon}.
\end{align}
\end{lemma}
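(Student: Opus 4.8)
\textbf{Proof plan for Lemma \ref{lem:2.3}.}
The plan is to derive the asserted bound directly from the quoted result of Heath-Brown \cite[Theorem 2]{DRHB}, which gives a large sieve inequality for real characters; the content of this lemma is merely to package that large sieve estimate into the convenient pointwise-on-average form stated here, and this is already done in \cite[Lemma 2.7]{G&Zhao20}, so I would follow that argument. First I would recall that \cite[Theorem 2]{DRHB} provides, for any complex numbers $a_n$ and any $X, N \geq 1$,
\begin{align*}
\sum_{\substack{\chi \in S(X)}} \Big| \sum_{n \leq N} a_n \chi(n) \Big|^2 \ll (XN)^{\varepsilon} (X+N) \sum_{n \leq N} |a_n|^2,
\end{align*}
where the sum over $\chi$ is restricted to real (quadratic) characters of conductor at most $X$. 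The first step is to reduce the estimation of $L(s,\chi)$ to a short Dirichlet polynomial: using the approximate functional equation (or contour shift, since $\Re(s)\geq 1/2$ and $|s-1|>\varepsilon$ keeps us away from the pole of $\zeta$ when $\chi$ is principal, and there is no pole otherwise), one writes $L(s,\chi) = \sum_{n \leq N} \chi(n) n^{-s} + (\text{error})$ with $N$ chosen of size roughly $X |s|$, the error being controlled by the functional equation and standard bounds. A convenient alternative is to split dyadically and write $L(s,\chi) \ll \sum_{j} |\sum_{N_j < n \leq 2N_j} \chi(n) n^{-s}|$ over $O(\log(X|s|))$ ranges $N_j$ up to $\approx X|s|$, plus a negligible tail.

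Next I would apply Cauchy--Schwarz to pass from a first moment over $\chi \in S(X)$ to a second moment: for each dyadic block of length $N_j$,
\begin{align*}
\sum_{\chi \in S(X)} \Big| \sum_{N_j < n \leq 2N_j} \frac{\chi(n)}{n^{s}} \Big| \leq \big( \# S(X) \big)^{1/2} \Big( \sum_{\chi \in S(X)} \Big| \sum_{N_j < n \leq 2N_j} \frac{\chi(n)}{n^{s}} \Big|^2 \Big)^{1/2},
\end{align*}
and then invoke the large sieve with $a_n = n^{-s} \mathbbm{1}_{N_j < n \leq 2N_j}$. Since $\#S(X) \ll X$ and $\sum_{N_j < n \leq 2N_j} |a_n|^2 \ll N_j^{1-2\Re(s)} \leq N_j^{0}$ when $\Re(s) \geq 1/2$ (and $\ll \log(2N_j)$ if $\Re(s)=1/2$ exactly, absorbed into $X^{\varepsilon}$), the large sieve yields a bound $\ll X^{\varepsilon} X^{1/2} \big( (X + N_j) \big)^{1/2}$ for each block. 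With $N_j \ll X|s|$ this is $\ll X^{\varepsilon} X^{1/2} (X + X|s|)^{1/2} \ll X^{1+\varepsilon} |s|^{1/2+\varepsilon}$ --- which is worse than claimed. To recover the sharper $|s|^{1/4+\varepsilon}$ one optimizes more carefully: the relevant length is $N \approx C(\chi)|\Im(s)|$ where $C(\chi)$ is the conductor, so one should not use the worst-case $N_j \approx X|s|$ uniformly but rather exploit that the large sieve term $(X+N)\sum|a_n|^2$ with $|a_n|^2 \ll N^{-2\Re(s)}$ and $N$ ranging up to $X|s|$ is dominated, after summing the geometric series in the $X+N$ factor against the decay in $\sum |a_n|^2$, by the balance point $N \approx X$, giving $\sum_{\chi} |\cdots|^2 \ll X^{1+\varepsilon}$ uniformly, hence $\sum_\chi |L(s,\chi)| \ll X^{\varepsilon}(X \cdot X^{1+\varepsilon})^{1/2}=X^{1+\varepsilon}$ for the main range; the $|s|^{1/4+\varepsilon}$ arises from handling the tail range $X < N \leq X|s|$, where the number of relevant terms and the functional-equation transition contribute the extra $|s|^{1/4}$. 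In practice I would simply cite \cite[Lemma 2.7]{G&Zhao20} and \cite[Theorem 2]{DRHB} and reproduce this optimization.

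\textbf{Main obstacle.} The delicate point is getting the exponent of $|s|$ down to $1/4$ rather than a cruder $1/2$ or $1$: a naive approximate-functional-equation truncation at length $\approx X|s|$ followed by Cauchy--Schwarz and the large sieve loses a power of $|s|$. The resolution, as in \cite{G&Zhao20, DRHB}, is to treat the ``principal'' part of the approximate functional equation (length $\approx X$, insensitive to $|s|$) and the ``dual'' part (where the $|s|$-dependence enters through the $\Gamma$-factor ratio $\Gamma_{e/o}(s;\chi)$ of size $\approx |s|^{\Re(s)-1/2} \cdot |s|^{1/2}$ by Stirling, contributing the square-root-of-conductor-times-$|s|$ heuristic) separately, and to note that Cauchy--Schwarz is applied to a quantity whose $\ell^2$ mass is concentrated, so the effective conductor-length product is $X \cdot |s|^{1/2}$ rather than $X|s|$. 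Carefully bookkeeping the $\Gamma$-factor growth via Stirling in the range $\Re(s)\geq 1/2$, together with the restriction $|s-1|>\varepsilon$ (needed only to avoid the pole contributed by the single principal character in $S(X)$, which can alternatively be excluded and estimated trivially by convexity as $\ll X^{1+\varepsilon}|s|^{1/4+\varepsilon}$ on its own), then yields the stated bound.
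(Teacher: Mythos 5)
First, note that the paper does not actually prove this lemma: it is quoted verbatim from \cite[Lemma 2.7]{G\&Zhao20} and attributed to \cite[Theorem 2]{DRHB}, so there is no internal argument to compare against. Your overall route --- approximate functional equation to reduce $L(s,\chi)$ to Dirichlet polynomials, then Cauchy--Schwarz over $\chi\in S(X)$ and Heath-Brown's quadratic large sieve --- is indeed the standard way the cited lemma is obtained, so the strategy is sound.

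However, as written your sketch does not reach the stated exponent $1/4$, and the paragraph meant to repair this is too vague to count as a proof. The concrete problem is your truncation length: you take the Dirichlet polynomials to have length up to $N\approx X|s|$, which after Cauchy--Schwarz and the large sieve can only give $X^{1+\varepsilon}|s|^{1/2+\varepsilon}$, as you yourself observe. The fix is not a delicate ``optimization'' or an appeal to concentration of $\ell^2$ mass; it is simply that the approximate functional equation truncates $L(s,\chi)$ at length about the square root of the analytic conductor, i.e. $N\approx\bigl(q(1+|s|)\bigr)^{1/2}\le\bigl(X(1+|s|)\bigr)^{1/2}$, with the dual sum carrying an $\epsilon$-factor of modulus $O(1)$ for $\Re(s)\ge 1/2$ and coefficients $n^{-(1-s)}$ whose $\ell^2$ mass is again $\ll N^{\varepsilon}$ after normalization. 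With this length, the large sieve bound $(X+N)X^{\varepsilon}\ll X(1+|s|)^{1/2}X^{\varepsilon}$ for the second moment of each block, combined with $\#S(X)\ll X^{1+\varepsilon}$ and Cauchy--Schwarz, gives directly
\begin{align*}
\sum_{\chi\in S(X)}|L(s,\chi)|\ll X^{1/2+\varepsilon}\bigl(X(1+|s|)^{1/2}\bigr)^{1/2}X^{\varepsilon}\ll X^{1+\varepsilon}|s|^{1/4+\varepsilon},
\end{align*}
which is exactly \eqref{L1estimation}; equivalently one may first record the second-moment estimate $\sum_{\chi\in S(X)}|L(s,\chi)|^2\ll X^{1+\varepsilon}|s|^{1/2+\varepsilon}$ (essentially Heath-Brown's corollaries to \cite[Theorem 2]{DRHB}) and then apply Cauchy--Schwarz once. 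Two further points you should make explicit rather than gesture at: the reduction from all real characters of conductor at most $X$ to the primitive quadratic characters to which \cite[Theorem 2]{DRHB} applies (pulling out the square part of the modulus costs only $X^{\varepsilon}$), and the treatment of the principal character, where the hypothesis $|s-1|>\varepsilon$ is what keeps $\zeta(s)$ under control; your remark on the latter is correct but should be tied to a convexity bound rather than left implicit.
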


\subsection{Some results on multivariable complex functions} Here, we quote some results necessary for our proofs from multivariable complex analysis.  First, we need the notation of a tube domain.
\begin{defin}
		An open set $T\subset\mc^n$ is a tube if there is an open set $U\subset\mr^n$ such that $T=\{z\in\mc^n:\ \Re(z)\in U\}.$
\end{defin}
	
   For a set $U\subset\mr^n$, we define $T(U)=U+i\mr^n\subset \mc^n$.  We shall utilize the following Bochner's Tube Theorem \cite{Boc}.
\begin{theorem}
\label{Bochner}
		Let $U\subset\mr^n$ be a connected open set and $f(z)$ a function holomorphic on $T(U)$. Then $f(z)$ has a holomorphic continuation to the convex hull of $T(U)$.
\end{theorem}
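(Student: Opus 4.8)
The plan is to reduce the general statement to a one-variable Hartogs-type extension phenomenon exploited iteratively, and then to assemble the pieces via a mean-value (averaging) argument. First I would observe that it suffices to show that $f$ continues holomorphically across any point $x_0$ lying in the convex hull of $U$; by a translation we may assume $x_0 = 0$, so that $0 = \sum_{j} t_j x_j$ for finitely many points $x_j \in U$ and weights $t_j > 0$ with $\sum_j t_j = 1$. Since $U$ is open, a small neighborhood of each $x_j$ lies in $U$; by Carathéodory's theorem we may even take at most $n+1$ points $x_j$, though this refinement is not essential. The key reduction is that the convex hull of $T(U)$ equals $T(\mathrm{conv}(U))$: the real part of a convex combination of points in $T(U)$ is the corresponding convex combination of their real parts, and conversely any point of $T(\mathrm{conv}(U))$ is reached. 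So the target is to extend $f$ to $T(\mathrm{conv}(U))$.

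The heart of the argument is the two-point case: if $x_0, x_1 \in U$ then $f$ extends holomorphically to a neighborhood of the real segment joining them, hence (iterating) to $T$ of any polygonal convex combination, and by density and the openness of domains of holomorphy to all of $T(\mathrm{conv}(U))$. To handle two points, I would use the classical ``continuity principle'' / Hartogs figure: consider the family of complex lines (or rather affine complex lines) through points of the segment in a transverse complex direction. More concretely, I would fix the complex line $\ell_w = \{x_0 + \zeta(x_1 - x_0) + i w : \zeta \in \mc\}$ parametrized by $\zeta$, for each fixed imaginary part $w \in \mr^n$. On the strip $\{0 < \Re(\zeta) < 1\} + i\mr$ (times the relevant $w$) the restriction of $f$ is holomorphic, because $\Re$ of the argument stays in $U$ for $\Re(\zeta) \in (0,1)$ near the endpoints — here one must be slightly careful and only claim holomorphy on a neighborhood of $\Re(\zeta) \in \{0\} \cup \{1\}$ together with $|\Im(\zeta)|$ bounded, which is a genuine Hartogs figure in the $\zeta$-variable; its holomorphic hull contains the full rectangle $\Re(\zeta) \in [0,1]$, $|\Im(\zeta)| \le R$ for any $R$. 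Applying the one-variable result (a Laurent/Cauchy-integral argument on the $\zeta$-variable, with parameters $w$, noting holomorphic dependence on parameters) yields a holomorphic extension of $f$ to a neighborhood of the segment.

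The cleanest way to make the parameter-dependence rigorous, and the route I would actually write up, is the Fourier/averaging method: cover $U$ by small open cubes, on each of which $f$ is given; on overlapping strips one shows the local Fourier–Laplace transforms in the imaginary directions agree, so that $f$ on $T(U)$ has a representation $f(x+iy) = \int e^{i\langle y,\xi\rangle} g(x,\xi)\, d\xi$ where, crucially, for each $\xi$ the function $x \mapsto e^{\langle x,\xi\rangle} \widehat{g}(x,\xi)$ is \emph{constant} in $x$ on each connected component — this is where connectedness of $U$ enters decisively — forcing $g(x,\xi) = e^{-\langle x,\xi\rangle} h(\xi)$ for a single $h$. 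Then the integral $\int e^{i\langle y,\xi\rangle - \langle x,\xi\rangle} h(\xi)\, d\xi$ manifestly converges and is holomorphic on the larger tube $T(\mathrm{conv}(U))$, since the set of $x$ for which $e^{-\langle x,\xi\rangle} h(\xi)$ is integrable is convex (it is an intersection of half-spaces, one for each $\xi$ in the support of $h$). I expect the main obstacle to be precisely the justification that these local Fourier representations patch into a single global one with the separated form $g(x,\xi) = e^{-\langle x,\xi\rangle}h(\xi)$: this requires care with distributional Fourier transforms (the functions need not be integrable, only tempered on slices) and with the use of connectedness to propagate the identity across all of $U$. Everything else — the convex-hull computation, holomorphy of integrals depending on a parameter, and the iteration — is routine.
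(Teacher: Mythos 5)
The paper does not prove this statement at all; it is quoted verbatim from Bochner's paper, so your proposal has to be judged as a self-contained proof of the classical tube theorem, and as it stands its central step has a genuine gap. In your two-point (segment) lemma you restrict $f$ to the affine complex line $\ell_w=\{x_0+\zeta(x_1-x_0)+iw\}$; since $\Re\big(x_0+\zeta(x_1-x_0)+iw\big)=x_0+\Re(\zeta)(x_1-x_0)$, this restriction is known to be holomorphic only on two vertical strips around $\Re(\zeta)=0$ and $\Re(\zeta)=1$. Two strips in a single complex variable are \emph{not} a Hartogs figure: in one variable there is no forced analytic continuation (every planar domain is a domain of holomorphy), so the assertion that ``its holomorphic hull contains the full rectangle'' is false as stated. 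A genuine Hartogs/continuity-principle argument needs a second \emph{complex} variable in which $f$ is holomorphic; your parameter $w$ is the (real) imaginary part, and $f$ is not holomorphic in $w$, so ``holomorphic dependence on parameters'' is not available. Making this work --- e.g.\ by Bochner's triangle lemma, or by the Kontinuit\"atssatz applied to a family of analytic discs whose boundaries are kept inside $T(U)$, together with the open--closed argument identifying the Cauchy-integral extension with $f$ --- is precisely the substance of the theorem, and it is missing from the sketch.

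The Fourier--Laplace route you propose as the ``clean'' write-up fails for a more basic reason than the one you flag: a function holomorphic on a tube has no growth restriction as $|\Im z|\to\infty$, so for fixed $x$ the slice $y\mapsto f(x+iy)$ need not be tempered at all (think of functions of the type $\exp(\exp(\langle z,v\rangle^2))$ in $n\ge 2$), and hence the representation $f(x+iy)=\int e^{i\langle y,\xi\rangle-\langle x,\xi\rangle}h(\xi)\,d\xi$ need not exist. The convexity-of-the-convergence-domain argument is the correct proof of the Paley--Wiener-type tube theorem for functions of tempered (or exponential) growth, but it does not prove the general statement without a substantial preliminary reduction that you do not indicate. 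The peripheral reductions you make (the identity $\widehat{T(U)}=T(\mathrm{conv}(U))$, reduction to segments and polygonal paths, single-valuedness on the convex hull) are fine, but both proposed engines for the key extension step are defective, so the proof is not complete.
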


 Let $\widehat T$ denote the convex hull of an open set $T\subset\mc^n$.  This next result \cite[Proposition C.5]{Cech1} gives bounds on the moduli of holomorphic continuations of multivariable complex functions.  In brief, it asserts that holomorphic continuations inherit the bounds on the functions from which they emanate.
\begin{prop}
\label{Extending inequalities}
		Assume that $T\subset \mc^n$ is a tube domain, $g,h:T\rightarrow \mc$ are holomorphic functions, and let $\tilde g,\tilde h$ be their holomorphic continuations to $\widehat T$. If  $|g(z)|\leq |h(z)|$ for all $z\in T$ and $h(z)$ is nonzero in $T$, then also $|\tilde g(z)|\leq |\tilde h(z)|$ for all $z\in \widehat T$.
\end{prop}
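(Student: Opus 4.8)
The plan is to reduce everything to a single bounded holomorphic function and apply Bochner's Tube Theorem twice. Write $T=T(U)$ with $U\subset\mr^n$ open; since $T$ is a domain, $U=\Re(T)$ is connected. As $h$ is nonvanishing on $T$, the quotient $f:=g/h$ is holomorphic on $T$, and the hypothesis $|g|\le|h|$ gives $|f|\le 1$ throughout $T$. By Theorem \ref{Bochner}, $f$ has a holomorphic continuation $\tilde f$ to $\widehat T=\mathrm{conv}(T)$, and $\widehat T$ is connected because $T$ is. The identity theorem in several variables then forces $\tilde g=\tilde f\,\tilde h$ on $\widehat T$: both sides are holomorphic on the connected set $\widehat T$ and agree with $g=fh$ on the nonempty open subset $T$. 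Hence, once we know $|\tilde f|\le 1$ on $\widehat T$, we get $|\tilde g|=|\tilde f|\,|\tilde h|\le|\tilde h|$ everywhere on $\widehat T$, which is the assertion.

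The heart of the matter is thus to propagate the bound $|f|\le1$ from $T$ to $\widehat T$. The naive approach --- continue $f$ by Bochner and then invoke a maximum-modulus or Phragm\'en--Lindel\"of principle on the tube --- is delicate, since $\tilde f$ need not be bounded a priori on the unbounded domain $\widehat T$ and one would first have to control its growth in the imaginary directions. I would instead argue via omitted values. Fix any $\lambda\in\mc$ with $|\lambda|>1$. For $z\in T$ we have $|f(z)-\lambda|\ge|\lambda|-|f(z)|\ge|\lambda|-1>0$, so $f-\lambda$ is nonvanishing on $T$ and $1/(f-\lambda)$ is holomorphic there. Applying Theorem \ref{Bochner} to $1/(f-\lambda)$ yields a holomorphic continuation $R_\lambda$ on $\widehat T$. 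Since $R_\lambda\cdot(\tilde f-\lambda)=1$ on $T$ and both sides are holomorphic on the connected set $\widehat T$, the identity theorem gives $R_\lambda\cdot(\tilde f-\lambda)\equiv1$ on $\widehat T$; in particular $\tilde f(z)\neq\lambda$ for every $z\in\widehat T$. Letting $\lambda$ run over all complex numbers of modulus $>1$, we conclude $\tilde f(\widehat T)\subseteq\{|\zeta|\le1\}$, i.e. $|\tilde f|\le1$ on $\widehat T$, which closes the argument.

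The only steps needing care are bookkeeping: checking that $\widehat T$ is connected (so the identity theorem applies) and that Theorem \ref{Bochner} is legitimately invoked on $1/(f-\lambda)$, which is fine because this function is holomorphic on the tube $T=T(U)$ with $U$ connected. I do not anticipate a genuine obstacle beyond stating these points cleanly; the one substantive idea is that each resolvent-type function $1/(f-\lambda)$ transports holomorphically to $\widehat T$ and thereby certifies that $\tilde f$ omits every value of modulus exceeding $1$, which is exactly the bound we need.
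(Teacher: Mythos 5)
Your argument is correct and complete. Note that the paper does not prove this proposition at all --- it is quoted verbatim from \cite[Proposition C.5]{Cech1} --- so there is no internal proof to compare against; your write-up is in effect a self-contained substitute for that citation, and it follows the natural route (which is also essentially the route taken in \cite{Cech1}): pass to $f=g/h$, which is holomorphic on $T$ with $|f|\le 1$ since $h$ is nonvanishing there, continue $f$ to $\widehat T$ by Theorem \ref{Bochner}, and recover $\tilde g=\tilde f\,\tilde h$ on the connected open set $\widehat T$ by the identity theorem. Your key step --- continuing $1/(f-\lambda)$ for each fixed $|\lambda|>1$ via Theorem \ref{Bochner} and using $R_\lambda\cdot(\tilde f-\lambda)\equiv 1$ to conclude that $\tilde f$ omits every value of modulus exceeding $1$ --- correctly sidesteps the genuine pitfall you identified, namely that a maximum-modulus or Phragm\'en--Lindel\"of argument on the unbounded tube would require a priori growth control that is not available. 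The bookkeeping points you flag (connectedness of $U=\Re(T)$ and of the convex hull $\widehat T$, openness of $\widehat T$, and the legitimacy of applying Theorem \ref{Bochner} to $1/(f-\lambda)$) are all fine as stated, so the proof stands as written.
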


\section{Proof of Theorem \ref{Theorem for all characters}}

  For $\Re(s)$, $\Re(w)$ sufficiently large, set
\begin{align}
\label{Aswfexp}
\begin{split}
A(s,w; l)=& \sum_{\substack{(d,2)=1 }}\frac{L(w, \chi^{(8d)})\chi^{(8d)}(l)}{d^s}
=\sum_{\substack{(dm,2)=1}}\frac{\chi^{(8d)}(ml)}{m^wd^s}= \sum_{\substack{(m,2)=1}}\frac{\chi^{(8)}(ml)L^{(2)}( s,\chi_{ml} )}{m^w}.
\end{split}
\end{align}

  In the next two sections, we develop some analytic properties of $A(s,w;l)$, necessary in establishing Theorem \ref{Theorem for all characters}.
	
\subsection{First region of absolute convergence of $A(s,w;l)$}

   We use the first equality in \eqref{Aswfexp} to arrive at, reducing the sum over $d$ to a sum over square-free integers,
\begin{align}
\label{Abound}
\begin{split}
		A(s,w;l)= & \sum_{\substack{(h,2l)=1}}\frac {1}{h^{2s}}\sumstar_{\substack{(d,2)=1}}\frac{L(w,  \chi^{(8d)})\chi^{(8d)}(l)\prod_{p | h}(1- \chi^{(8d)}(p)p^{-w}) }{d^s}
= \zeta^{(2l)}(2s)\sumstar_{\substack{(d,2)=1}}\frac{L(w,  \chi^{(8d)})\chi^{(8d)}(l)}{d^sL^{(2l)}(2s+w,  \chi^{(8d)})}.
\end{split}
\end{align}

  Now \eqref{L1estimation} and partial summation yield that the sum in the last expression of \eqref{Abound} is convergent for $\Re(s)>1$,  $\Re(w) \geq 1/2$ as well as for $\Re(2s)>1$, $\Re(2s+w)>1$, $\Re(s+w)>3/2$, $\Re(w) < 1/2$.  Hence, $A(s,w; l)$ converges absolutely in the region
\begin{equation*}
%%\label{key}
		S_0=\{(s,w): \Re(s)>1,\ \Re(2s+w)>1,\ \Re(s+w)>3/2\}.
\end{equation*}

As the condition $\Re(2s+w)>1$ is contained in the other conditions, the description of $S_0$ simplifies to
\begin{equation*}
%%\label{key}
		S_0=\{(s,w): \Re(s)>1, \ \Re(s+w)>3/2\}.
\end{equation*}

 Next, upon writing $m=m_0m^2_1$ with $m_0$ odd and square-free, the last expression in \eqref{Aswfexp} is recast as
\begin{align}
\label{Sum A(s,w,z) over n}	
\begin{split}	
A(s,w;l)=& \sum_{\substack{(m_1,2)=1}}\frac{1}{m_1^{2w}}\sumstar_{\substack{(m_0,2)=1}}\frac{\chi^{(8)}(m_0l)L^{(2)}( s, \chi_{m_0l})\prod_{p | m_1}(1-\chi_{m_0l}(p)p^{-s}) }{m_0^{w}} \\
=& \zeta^{(2)}(2w)\sumstar_{\substack{(m_0,2)=1}}\frac{\chi^{(8)}(m_0l)L^{(2)}( s, \chi_{m_0l})}{m_0^{w}L^{(2)}(2w+ s, \chi_{m_0l})}.
\end{split}
\end{align}

  Arguing as above by making use of \eqref{L1estimation} and partial summation again, the sum over $m_0$ in \eqref{Sum A(s,w,z) over n} converges absolutely in
\begin{align*}
%%\label{key}
		S_1=& \{(s,w): \Re(w)>1, \ \Re(s+w)>3/2\}.
\end{align*}

The convex hull of $S_0 \cup S_1$ is
\begin{equation*}
%%\label{Region of convergence of A(s,w,z)}
		S_2=\{(s,w):\ \Re(s+w)> 3/2 \}.
\end{equation*}

Hence, Theorem \ref{Bochner} implies that $(s-1)(w-1)A(s,w;l)$ can be holomorphically continued to the region $S_2$.

\subsection{Second region of absolute convergence of $A(s,w;l)$}

 From \eqref{Aswfexp}, we infer
\begin{align}
\begin{split}
\label{A1A2}
 A(s,w; l) =& \sum_{\substack{(m,2)=1 \\ ml =  \square}}\frac{\chi^{(8)}(ml)L^{(2)}( s,\chi_{ml} )}{m^w} +\sum_{\substack{(m,2)=1 \\ ml \neq \square}}\frac{\chi^{(8)}(ml)L( s,\chi^{(4)}\chi_{ml} )}{m^w} := \ A_1(s,w;l)+A_2(s,w;l).
\end{split}
\end{align}

   As $l$ is square-free, we deduce, in a manner similar to the derivation of \eqref{Sum A(s,w,z) over n}, that
\begin{align*}
\begin{split}
%%\label{A1def}
 A_1(s,w;l) = \sum_{\substack{(m,2)=1 \\ ml =  \square}}\frac{\chi^{(8)}(ml)L^{(2)}( s,\chi_{ml} )}{m^w}
= \frac{ \zeta^{(2)}(2w)L^{(2)}( s, \chi_{l^2})}{l^{w}L^{(2)}(2w+ s, \chi_{l^2})},
\end{split}
\end{align*}
  It follows from the above that save for a simple pole at $s=1$, $A_1(s,w; l)$ is holomorphic in
\begin{align}
\label{S3}
	S_3=\Big\{(s,w):\ &  \Re(s+2w)>1, \Re (w) > 3/4  \Big\}.
\end{align}

   For two Dirichlet characters $\psi$ and $\psi'$ with conductors dividing $8$, we define
\begin{align}
\begin{split}
\label{C12def}
	C_1(s,w;l,\psi,\psi'):= \sum_{\substack{m,q\geq 1 \\ (m,2)=1}}\frac{G\lz \chi_{ml},q\pz\psi(m)\psi'(q) }{m^wq^s} \quad \mbox{and} \quad C_2(s,w;l,\psi,\psi'):=  \sum_{\substack{m,q\geq 1 \\ (m,2)=1 \\ ml=\square}}\frac{G \lz \chi_{ml},q\pz\psi(m) \psi'(q) }{m^{w}q^s}.
\end{split}
\end{align}

Using the functional equation in Corollary~\ref{Functional equation for quadLfcn}, we arrive at
\begin{align}
\label{preA2exp}
\begin{split}
 A_2(s,w; l)
 =& -\frac{2\pi^{s-1/2}}{(4l)^s}\Gamma_{e}(s)\sum_{\substack{m, q \geq 1 \\ (m,2)=1 \\ ml \neq \square \\ ml \equiv 1 \pmod 4 \\ q \equiv 2 \pmod 4}}\frac{\chi^{(8)}(ml)G\lz \chi_{ml},q\pz}{m^{w+s}q^{1-s}}-\frac{2\pi^{s-1/2}}{(4l)^s}\Gamma_{o}(s)\sum_{\substack{m, q \geq 1 \\ (m,2)=1 \\ ml \neq \square \\ ml \equiv -1 \pmod 4 \\ q \equiv 2 \pmod 4}}\frac{\chi^{(8)}(ml)G\lz \chi_{ml},q\pz}{m^{w+s}q^{1-s}} \\
 & \hspace*{1cm}+\frac{2\pi^{s-1/2}}{(4l)^s}\Gamma_{e}(s)\sum_{\substack{m, q \geq 1 \\ (m,2)=1 \\ ml \neq \square \\ ml \equiv 1 \pmod 4 \\ q \equiv 0 \pmod 4}}\frac{\chi^{(8)}(ml)G\lz \chi_{ml},q\pz}{m^{w+s}q^{1-s}}+\frac{2\pi^{s-1/2}}{(4l)^s}\Gamma_{o}(s)\sum_{\substack{m, q \geq 1 \\ (m,2a)=1 \\ ml \neq \square \\ ml \equiv -1 \pmod 4\\ q \equiv 0 \pmod 4 }}\frac{\chi^{(8)}(ml)G\lz \chi_{ml},q\pz}{m^{w+s}q^{1-s}} .
 \end{split}
 \end{align}
With the identity $G(\chi_n, 4q) = G(\chi_n, q)$, the last two terms on the right-hand side of \eqref{preA2exp} can be rewritten as
\[ \frac{2^{2s-1}\pi^{s-1/2}}{(4l)^s}\Gamma_{e}(s)\sum_{\substack{m, q \geq 1 \\ (m,2)=1 \\ ml \neq \square \\ ml \equiv 1 \pmod 4 }}\frac{\chi^{(8)}(ml)G\lz \chi_{ml},q\pz}{m^{w+s}q^{1-s}}+\frac{2^{2s-1}\pi^{s-1/2}}{(4l)^s}\Gamma_{o}(s)\sum_{\substack{m, q \geq 1 \\ (m,2)=1 \\ ml \neq \square \\ ml \equiv -1 \pmod 4}}\frac{\chi^{(8)}(ml)G\lz \chi_{ml},q\pz}{m^{w+s}q^{1-s}} . \]
  Using the characters $\psi_i$ introduced at the beginning of Section \ref{sec 2.1} to detect the various congruence conditions on $ml$ and $q$ in \eqref{preA2exp}, we transform $A_2(s,w; l)$ in the following way.
\begin{align}
\label{A2exp}
\begin{split}
 A_2(s,w; l)
=& -\frac{2^s\pi^{s-1/2}}{(4l)^s}\frac {\Gamma_{e}(s)}{2}\sum^2_{i=1}(-1)^{i+1}\big (C_i(1-s,w+s;l,\psi_1, \psi_1)+C_i(1-s,w+s;l,\psi_{-1}, \psi_{1}) \big ) \\
&-\frac{2^s\pi^{s-1/2}}{(4l)^s}\frac {\Gamma_{o}(s)}{2}\sum^2_{i=1}(-1)^{i+1}\big (C_i(1-s,w+s;l,\psi_1, \psi_1)-C_i(1-s,w+s;l,\psi_{-1}, \psi_{1}) \big )  \\
&+\frac{2^{2s-1}\pi^{s-1/2}}{(4l)^s}\frac {\Gamma_{e}(s)}{2}\sum^2_{i=1}(-1)^{i+1}\big (\psi_2(l)C_i(1-s,w+s;l,\psi_2, \psi_0)+\psi_{-2}(l)C_i(1-s,w+s;l,\psi_{-2}, \psi_{0}) \big ) \\
&+\frac{2^{2s-1}\pi^{s-1/2}}{(4l)^s}\frac {\Gamma_{o}(s)}{2}\sum^2_{i=1}(-1)^{i+1}\big (\psi_2(l)C_i(1-s,w+s;l,\psi_2, \psi_0)-\psi_{-2}(l)C_i(1-s,w+s;l,\psi_{-2}, \psi_{0}) \big ).
\end{split}
\end{align}

  Following the approach by K. Soundararajan and M. P. Young in \cite[\S 3.3]{S&Y} to write every integer $q \geq 1$ uniquely as $q=q_1q^2_2$ with $q_1$ square-free, we get that
\begin{equation}
\label{Cidef}
		C_i(s,w;l, \psi,\psi')=\sumstar_{q_1}\frac{\psi'(q_1)}{q_1^s}\cdot D_i(s, w; l, q_1,\psi, \psi'), \quad i =1,\; 2,
\end{equation}
	where
\begin{align*}
%%\label{D12def}
\begin{split}
%%\label{key}
		D_1(s, w; l,q_1, \psi,\psi') := \sum_{\substack{m,q_2=1 \\ (m,2)=1}}^\infty\frac{G\lz \chi_{ml},q_1q^2_2\pz\psi(m)\psi'(q^2_2) }{m^wq^{2s}_{2}} \; \mbox{and} \;
  D_2(s, w; l, q_1, \psi,\psi') := \sum_{\substack{m,q_2=1 \\ (m,2)=1 \\ ml=\square}}^\infty\frac{G\lz \chi_{ml},q_1q^2_2\pz\psi(lm)\psi'(q^2_2) }{m^{w}q^{2s}_{2}}.
\end{split}
\end{align*}

   Plainly, the functions $D_i$ are defined for $\Re(s), \Re(w)$ large enough by Lemma \ref{lem:Gauss}. We now have the following result concerning the analytic properties of $D_i(s, w;l, q_1, \psi, \psi')$, which is a generalization of \cite[Lemma 3.4]{G&Zhao20}.
\begin{lemma}
\label{Estimate For D(w,t)}
 Under the same notations as above and with $\psi=\psi_1$ or $\psi_2$, the functions $D_i(s, w; l, q_1, \psi,\psi')$, with $i=1$, $2$ have meromorphic continuations to the region
\begin{align}
\label{Dregion}
		\{(s,w): \Re(s)>1, \ \Re(w)> 1 \}.
\end{align}

   If we assume the truth of RH, then the functions $D_i(s, w; l, q_1, \psi,\psi')$, with $i=1$, $2$ have meromorphic continuations to the region
\begin{align}
\label{DregionRH}
		\{(s,w): \Re(s)>1, \ \Re(w)> 3/4 \}.
\end{align}
    The only poles in the above regions occur for $D_1(s, w; q_1, \psi,\psi')$ at $w = 3/2$ when $q_1=1,  \psi=\psi_1$ and when $q_1=2,  \psi=\psi_2$ and the corresponding pole is simple in both cases.  For $\Re(s) \geq 1+\varepsilon$, $1+\varepsilon \leq \Re(w)\leq 2$, away from the possible
poles,
\begin{align}
\label{Diest}
			|D_i(s, w; l, q_1, \psi,\psi')|\ll l^{1/2+\varepsilon}(q_1(1+|w|))^{\max \{ (3/2-\Re(w))/2, 0 \}+\varepsilon}.
\end{align}	
   Moreover, in the region $\Re(s) \geq 1+\varepsilon$,
\begin{align}
\label{D1res}
\begin{split}
&\res_{w=3/2}D_1(s,w;l,1, \psi_1, \psi_1)=l^{1/2} \frac {\zeta^{(2)}(2s)}{2\zeta^{(2)}(2)} \prod_{p|l} \Big(1+\frac {1}{p} \Big)^{-1} .
\end{split}
\end{align}
   The relations \eqref{Diest} and \eqref{D1res} also hold, under RH, in the region $\Re(s) \geq 1+\varepsilon$, $3/4+\varepsilon \leq \Re(w)\leq 2$.
\end{lemma}
\begin{proof}
   We establish the lemma only for $D_1(s, w; l, q_1, \psi,\psi')$ here since the proof for $D_2(s, w; l, q_1, \psi,\psi')$ is similar. By virtue of Lemma~\ref{lem:Gauss} that $D_i, i=1,2$ admit Euler products
\begin{align*}
%%\label{D1Eulerprod}
\begin{split}	
 &	D_1(s, w; l, q_1, \psi,\psi')= \prod_p D_{1,p}(s, w; l, q_1,  \psi,\psi'),
\end{split}
\end{align*}
  where
\begin{align}
\label{Dexp}
\begin{split}	
 &	D_{1,p}(s, w; l;q_1,  \psi,\psi')= \displaystyle
\begin{cases}
\displaystyle \sum_{k=0}^\infty\frac{ \psi'(p^{2k})}{p^{2ks}}, &  \mbox{if} \; p|2, \\
\displaystyle \sum_{h,k=0}^\infty\frac{ \psi(p^{h+\text{ord}_p(l)})\psi'(p^{2k})G\lz \chi_{p^{h+\text{ord}_p(l)}}, q_1p^{2k} \pz  }{p^{hw+2ks}}, & \mbox{if} \; p \nmid 2.
\end{cases}
\end{split}
\end{align}

  The case $p \nmid l$ of $D_{1,p}$ has already been evaluated in \cite[(3.23), (3.24)]{G&Zhao20}. In particular, we have for $\Re(s)>1$, $\Re(w)>3/4$ and $p \nmid 2q_1l$,
\begin{align}
\label{Dgenexp}	
\begin{split}
    D_{1,p}(s, w; l, q_1,  \psi,\psi') =& 1+\frac{ \psi(p)\chi^{(q_1)}(p)}{p^{w-1/2}} +O \Big ( p^{-2\Re(s)}+p^{-2\Re(s)-\Re(w)+1}+p^{-2\Re(s)-4\Re(w)+4} \Big )  \\
   =& \frac {L_{p}\lz w-\tfrac{1}{2}, \chi^{(q_1)}\psi\pz}{\zeta_{p}(2w-1)}\lz 1+O \Big ( p^{-2\Re(s)}+p^{-2\Re(s)-\Re(w)+1}+p^{-2\Re(s)-4\Re(w)+4} \Big ) \pz .
\end{split}
\end{align}
Here for an $L$-function, $L_p$ denotes the factor in its Euler product for the prime $p$.  The above now allows us to deduce the meromorphic continuation in the regions \eqref{Dregion} and \eqref{DregionRH} by noticing here that the only difference between these two cases is due to the presence of poles of $\zeta^{-1}(2w-1)$ since it has no pole when $\Re(w)>1$ unconditionally and even in the larger region $\Re(w)>3/4$ under RH. \newline

Hence, we shall in what follows only establish the statements concerning \eqref{Diest} and \eqref{D1res} unconditionally. We also deduce from  \eqref{Dgenexp} that the only poles in the region given in \eqref{Dregion} or \eqref{DregionRH} are at $w = 3/2$ only when $q_1=1$, $\psi=\psi_1$ or when $q_1=2, \psi=\psi_2$. In either case, the corresponding pole is simple. \newline

  Next, as $l$ is square-free, we get from \eqref{Dexp} that if $p|l$,
\begin{align}
\label{Dexpl}
\begin{split}	
 	D_{1,p}(s, w; l, q_1,  \psi,\psi')=&
\displaystyle p^{w}\sum_{\substack{h \geq 1,k \geq 0}}\frac{ \psi(p^{h})\psi'(p^{2k})G\lz \chi_{p^{h}}, q_1p^{2k} \pz  }{p^{hw+2ks}} \\
=& p^{w}\Big (\sum_{\substack{h \geq 0,k \geq 0}}\frac{ \psi(p^{h})\psi'(p^{2k})G\lz \chi_{p^{h}}, q_1p^{2k} \pz  }{p^{hw+2ks}}-\sum_{k \geq 0}\frac{ G\lz \chi_{1}, p^{2k} \pz }{p^{2ks}}\Big ) \\
=& p^{w}\Big (\sum_{\substack{h \geq 0,k \geq 0}}\frac{ \psi(p^{h})\psi'(p^{2k})G\lz \chi_{p^{h}}, q_1p^{2k} \pz  }{p^{hw+2ks}}- \frac{1}{1-p^{-2s}} \Big ).
\end{split}
\end{align}
The last double sums above equal exactly $D_{1,p}$ by latter's definition in \eqref{Dexp} for $p \nmid 2l$. It follows from this and \eqref{Dgenexp} that for $\Re(s)>1$, $3/4<\Re(w)<2$ and $p|l$,
\begin{align*}
%%\label{Dgenexpl}	
\begin{split}
    D_{1,p}(s, w; l, q_1,  \psi,\psi') = &  p^{w}\Big (1+\frac{ \psi(p)\chi^{(q_1)}(p)}{p^{w-1/2}} +O \Big ( p^{-2\Re(s)}+p^{-2\Re(s)-\Re(w)+1}+p^{-2\Re(s)-4\Re(w)+4} \Big )  -(1-p^{-2s})^{-1}\Big ) \\
   =& p^{1/2}\Big (\psi(p)\chi^{(q_1)}(p)+O \Big ( p^{-2\Re(s)+\Re(w)-1/2}+p^{-2\Re(s)+1/2}+p^{-2\Re(s)-3\Re(w)+7/2} \Big )\Big ).
\end{split}
\end{align*}

    We use the above and argue as in the proof of \cite[Lemma 3.4]{G&Zhao20}.  This lead to to the conclusion that away from the possible
poles, the bound in \eqref{Diest} holds in the region $\Re(s) \geq 1+\varepsilon$, $3/4+\varepsilon \leq \Re(w)\leq 2$. \newline

  Lastly, we establish \eqref{D1res} by noticing that $D_{1,p}(s, w; l, 1,  \psi_1,\psi_1)$ has already been computed in the proof of \cite[Lemma 3.4]{G&Zhao20}, so that for $p \nmid 2l$,
\begin{align}
\label{D1pexp}	
\begin{split}
   D_{1,p}(s, w;  l, 1,  \psi_1,\psi_1)=& \zeta_p(w-\tfrac{1}{2})Q_p(s,w;l),
\end{split}
\end{align}
  where
\begin{align}
\label{Qpexp}	
\begin{split}
   Q_{p}(s, w;l)\Big |_{w=3/2} =& \Big(1-\frac {1}{p^{2}} \Big)(1-p^{-2s})^{-1}.
\end{split}
\end{align}

    We deduce from the above and \eqref{Dexpl} that for $p | l$,
\begin{align}
\label{D1pexpl}	
\begin{split}
   D_{1,p}(s, w;   l,  1,  \psi_1,\psi_1)=& p^{w-1}\Big(1+\frac {1}{p} \Big)^{-1}\zeta_p(w-\tfrac{1}{2})Q_p(s,w;l),
\end{split}
\end{align}
  where the restriction of $Q_{p}(s, w; l)$ at $w=3/2$ has the same value as that given in \eqref{Qpexp}. \newline

  We recall that the conductor of $\psi'$ divides $8$, so that for any $\psi'$ and any $p>2$,
\begin{align} 
\label{Dp2a}
\begin{split}	
\sum_{k=0}^\infty\frac{ \psi'(p^{2k})}{p^{2ks}}= (1-p^{-2s})^{-1}.
\end{split}
\end{align}
  The above equality continues to hold for $p=2, \psi'=\psi_0$ while the left-hand side expression \eqref{Dp2a} equals $0$ for $p=2$, $\psi' \neq \psi_0$. \newline
	
   These observations enable us to derive from \eqref{Dexp}, \eqref{D1pexp}--\eqref{D1pexpl} that
\begin{align*}
%%\label{D1q11}
\begin{split}	
 &	D_1(s, w; l, 1, \psi_1,\psi_1)= l^{w-1} \prod_{p|l} \Big(1+\frac {1}{p} \Big)^{-1} \zeta^{(2)}(w-\tfrac{1}{2})Q(s,w;l),
\end{split}
\end{align*}
  where $Q(s,w;l)=\prod_{p>2}Q_p(s,w;l)$ with $Q_p(s,w;l)$ defined in \eqref{D1pexp}, \eqref{D1pexpl}.  Mindful that the residue of $\zeta(s)$ at $s=1$ is $1$, we readily deduce \eqref{D1res}, completing the proof of the lemma.
\end{proof}

  We now apply Lemma \ref{Estimate For D(w,t)} with \eqref{Cidef} to see that $(w-3/2)C_i(s,w; l, \psi,\psi')$, $i=1$, $2$ are defined in the region
\begin{equation*}
%%\label{key}
\begin{cases}
\displaystyle \{(s,w):\ \Re(s)>1, \ 1<\Re(w)<2, \ \Re(s+w/2)>7/4 \}, & \quad \text{unconditionally}, \\
\displaystyle \{(s,w):\ \Re(s)>1, \ 3/4<\Re(w)<2, \ \Re(s+w/2)>7/4 \}, & \quad \text{under RH}.
\end{cases}		
\end{equation*}
	The above together with \eqref{A1A2}, \eqref{S3} and \eqref{A2exp} now implies that $(s-1)(w-1)(s+w-3/2)A(s,w;l)$ can be extended to the region
\begin{equation*}
%%\label{key}
\begin{cases}
		S_4=\{(s,w):\ 1<\Re(s+w)<2,\ \Re(w-s)>3/2, \ \Re(s)<0\}, & \quad \text{unconditionally},\\
    S'_4=\{(s,w):\ 3/4<\Re(s+w)<2,\ \Re(w-s)>3/2, \ \Re(s)<0\}, & \quad \text{under RH}.
\end{cases}	
\end{equation*}
  Now, the convex hull $S_5$ (resp. $S'_5$) of $S_2 \cup S_4$ (resp. $S_2 \cup S'_4$) is
\begin{align*}
%%\label{key}
		S_5= \{(s,w):\ \Re(s+w)>1 \}, \quad \mbox{and} \quad
   S'_5= \{(s,w):\ \Re(s+w)>3/4 \}.
\end{align*}
Now Theorem \ref{Bochner} implies that $(s-1)(w-1)(s+w-3/2)A(s,w;l)$ admits analytic continuation to $S_5$ unconditionally and to $S'_5$ under RH.
	
\subsection{Residues}
\label{sec:resA}
	
	We see from \eqref{Sum A(s,w,z) over n} that $A(s,w;l)$ has a pole at $s=1$ arising from the terms with $ml=\square$. As the residue of $\zeta(s)$ at $s = 1$ equals $1$, we deduce that
\begin{align}
\label{Residue at s=1}
 \res_{s=1}& A(s, \tfrac{1}{2}+\alpha; l) = \res_{s=1} A_1(s, \tfrac{1}{2}+\alpha; l)
=\frac {\zeta^{(2)}(1+2\alpha)}{2l^{1/2+\alpha}\zeta^{(2)}(2+2\alpha)}\prod_{p|l}\frac{1-p^{-1}}{1-p^{-2-2\alpha}} .
\end{align}

    Next, we keep the notation from the proof of Lemma \ref{Estimate For D(w,t)}. We deduce from \eqref{A2exp} and Lemma \ref{Estimate For D(w,t)} that $A(s,w;l)$ has a pole at $s+w=3/2$ and
\begin{align}
\label{Cres}
\begin{split}
\res_{s=3/2-w}A(s,w;l)=& \res_{s=3/2-w}A_2(s,w;l) \\
=& -\frac{2^s\pi^{s-1/2}}{(4l)^s}\frac {\Gamma_{e}(s)+\Gamma_{o}(s)}{2}\res_{s=3/2-w}C_1(1-s,w+s;l,\psi_1, \psi_1)\\
&\hspace*{1cm} +\frac{2^{2s-1}\pi^{s-1/2}}{(4l)^s}\frac {\Gamma_{e}(s)+\Gamma_{o}(s)}{2}\res_{s=3/2-w} \psi_2(l)C_1(1-s,w+s;l,\psi_2, \psi_0).
\end{split}
\end{align}

Using \eqref{C12def} to write $C_1(1-s,w+s;l,\psi_2, \psi_0)$ as a double series and applying Lemma \ref{Estimate For D(w,t)}, we see that the pole at
$s+w=3/2$ occurs only when $2 |q$. Upon making a change of variable $q\rightarrow 2q$, we deduce from Lemma \ref{Estimate For D(w,t)} again that
\begin{align}
\label{Cres1}
\begin{split}
 & \frac{2^{2s-1}\pi^{s-1/2}}{(4l)^s}\frac {\Gamma_{e}(s)+\Gamma_{o}(s)}{2}\res_{s=3/2-w} \psi_2(l)C_1(1-s,w+s;l,\psi_2, \psi_0) \\
=& \frac{2^{3s-2}\pi^{s-1/2}}{(4l)^s}\frac {\Gamma_{e}(s)+\Gamma_{o}(s)}{2}\res_{s=3/2-w} C_1(1-s,w+s;l,\psi_1, \psi_0).
\end{split}
\end{align}

By \eqref{Cidef}, we see that for $i=0,1$, 
\begin{align}
\label{Cres2}
\begin{split}
  \res_{s=3/2-w} C_1(1-s,w+s; l,\psi_1, \psi_i)= \res_{s=3/2-w} D_1(1-s,w+s;l,1, \psi_1, \psi_i).
\end{split}
\end{align}

   Note also from \eqref{Dexp} and \eqref{Dp2a} that 
\begin{align}
\label{Cres3}
\begin{split}
  D_1(1-s,w+s;l,1, \psi_1, \psi_0)=(1-2^{-2(1-s)})^{-1}D_1(1-s,w+s;l,1, \psi_1, \psi_1).
\end{split}
\end{align}
  It follows from \eqref{Cres1}--\eqref{Cres3} that
\begin{align*}
%%\label{Cres1psi1}
\begin{split}
 & \frac{2^{3s-2}\pi^{s-1/2}}{(4l)^s}\frac {\Gamma_{e}(s)+\Gamma_{o}(s)}{2}\res_{s=3/2-w} C_1(1-s,w+s;l,\psi_1, \psi_0) \\
=& \frac{2^{3s-2}\pi^{s-1/2}}{(4l)^s}(1-2^{-2(1-s)})^{-1}\frac {\Gamma_{e}(s)+\Gamma_{o}(s)}{2}\res_{s=3/2-w} D_1(1-s,w+s;l, 1, \psi_1, \psi_1).
\end{split}
\end{align*}

Substituting the above into \eqref{Cres} and making use of \eqref{Cres2} for the case $i=0$ renders that
\begin{align}
\label{Ares}
\begin{split}
& \res_{s=3/2-w}A(s,w;l) \\
=& (2^{3s-2}(1-2^{-2(1-s)})^{-1}-2^s)\frac{\pi^{s-1/2}}{(4l)^s}\frac {\Gamma_{e}(s)+\Gamma_{o}(s)}{2}\res_{s=3/2-w}D_1(1-s,w+s;l,q,\psi_1, \psi_1) \\
=& 2^{s-2}\frac{1-2^{-(1-2(1-s))}}{1-2^{-2(1-s)}} \frac{\pi^{s-1/2}}{l^s}(\Gamma_{e}(s)+\Gamma_{o}(s))\res_{s=3/2-w}D_1(1-s,w+s;l,q, \psi_1, \psi_1).
\end{split}
\end{align}

   We then set $w=1/2+\alpha$ so that $s=1-\alpha$ to see from the above, \eqref{D1res} and \eqref{Ares} that
\begin{align}
\label{Areswfixed}
\begin{split}
& \res_{s=1-\alpha}A(s,1/2+\alpha;l) = 2^{-1-\alpha}(1-2^{-(1-2\alpha)})\pi^{1/2-\alpha}l^{-1/2+\alpha}(\Gamma_{e}(1-\alpha)+\Gamma_{o}(1-\alpha))\frac {\zeta(2\alpha)}{2\zeta^{(2)}(2)} \prod_{p|l} \Big(1+\frac {1}{p} \Big)^{-1} .
\end{split}
\end{align}

Next, by \cite[Lemma 2.5]{Cech1}, we have
\begin{equation*}
%%\label{gammasum}
			\Go(s)+\Ge(s)=\frac{2^{s+1/2}\Gamma(1-s)\cos\lz \frac{\pi }{2}(s-\frac 12)\pz}{\sqrt \pi}.
\end{equation*}
   The above with $s=1-\alpha$ leads to
\begin{equation*}
%%\label{gammasum1}
			\Go(1-\alpha)+\Ge(1-\alpha)=\frac{2^{3/2-\alpha}\Gamma(\alpha)\cos\lz \frac{\pi}{2} (\frac 12-\alpha)\pz}{\sqrt \pi}.
\end{equation*}
	
  Note that the functional equation \eqref{fcneqnprimitive} for $n=1$ and $\chi$ being the principal character modulo $1$ implies that
\begin{align*}
%%\label{zetafcneqn}
  \zeta(2\alpha)=\pi^{2\alpha-1/2}\frac {\Gamma(\tfrac{1}{2}-\alpha)}{\Gamma (\alpha)}\zeta(1-2\alpha).
\end{align*}

Therefore,
\begin{align*}
%%\label{zetafcneqn}
  \lz \Go(1-\alpha)+\Ge(1-\alpha) \pz \zeta(2\alpha)=\pi^{2\alpha-1}2^{3/2-\alpha}\Gamma(\tfrac{1}{2}-\alpha)\cos\lz \tfrac{\pi}{2} (\tfrac 12-\alpha)\pz\zeta(1-2\alpha).
\end{align*}

    We further note the following relation (see \cite[\S 10]{Da})
\begin{align*}
%%\label{zetafcneqn}
 \frac {\Gamma (\frac s2)}{\Gamma (\frac 12-\frac s2)}=\pi^{-1/2}2^{1-s}\cos(\frac {\pi}{2}s)\Gamma(s).
\end{align*}

   We then deduce that
\begin{align}
\label{gammazeta}
  \lz \Go(1-\alpha)+\Ge(1-\alpha) \pz \zeta(2\alpha)=\pi^{2\alpha-1/2}2^{1-2\alpha}\frac {\Gamma(\tfrac{1}{4}-\tfrac{\alpha}{2})}{\Gamma(\tfrac{1}{4}+\tfrac{\alpha}{2})}.
\end{align}

From \eqref{Areswfixed} and \eqref{gammazeta},
\begin{align}
\label{Areswexplicit}
\begin{split}
& \res_{s=1-\alpha}A(s,1/2+\alpha; l) = \gamma_{\alpha}l^{-1/2+\alpha} \frac {\zeta^{(2)}(1-2\alpha)}{2\zeta^{(2)}(2)} \prod_{p|l} \Big(1+\frac {1}{p} \Big)^{-1} ,
\end{split}
\end{align}
  where $\gamma_{\alpha}$ is defined in \eqref{gammadef}.

\subsection{Completion of the proof}

The Mellin inversion gives that
\begin{equation}
\label{Integral for all characters}
		\sum_{(d,2)=1}L(\tfrac 12, \chi^{(8d)})\chi^{(8d)}(l)w \bfrac {d}X=\frac1{2\pi i}\int\limits_{(2)}A\lz s,\tfrac12+\alpha;l\pz X^s\widehat w(s) \dif s,
\end{equation}
  where $A(s, w; l)$ defined in \eqref{Aswfexp} and $\widehat{w}$ is the Mellin transform of $w$ given by
\begin{align*}
     \widehat{w}(s) =\int\limits^{\infty}_0w(t)t^s\frac {\dif t}{t}.
\end{align*}
   Observe that integration by parts implies that for any integer $E \geq 0$,
\begin{align}
\label{whatbound}
 \widehat w(s)  \ll  \frac{1}{(1+|s|)^{E}}.
\end{align}

  We define
\begin{equation*}
%%\label{Definition of S tilde}
		\widetilde S_5=S_{5,\delta}\cap\{(s,w):\Re(s) \geq -5/2 \} \quad \mbox{and} \quad \widetilde S'_5=S'_{5,\delta}\cap\{(s,w):\Re(s) \geq -5/2 \},
\end{equation*}
	where $\delta$ is a fixed number with $0<\delta <1/1000$, $S_{5,\delta}= \{ (s,w)+\delta (1,1) : (s,w) \in S_5 \}$ and $S'_{5,\delta}= \{ (s,w)+\delta (1,1) : (s,w) \in S'_5 \}$. We further set
\begin{equation*}
%%\label{key}
		p(s,w)=(s-1)(w-1)(s+w-3/2), \quad \tilde p(s,w)=1+|p(s,w)|.
\end{equation*}
  We argue as in \cite[Section 3.6]{G&Zhao20}, upon using Proposition \ref{Extending inequalities} and Lemma \ref{Estimate For D(w,t)} to see that in $\widetilde S_5$ or $\widetilde S'_5$,
\begin{equation}
\label{AboundS4}
		|p(s,w)A(s,w;l)|\ll l^{1/2+\varepsilon} \tilde p(s,w)(1+|w|)^{2+\varepsilon}(1+|s|)^{5+\varepsilon}.
\end{equation}

  Given \eqref{whatbound} and \eqref{AboundS4}, we now shift the line of integration in \eqref{Integral for all characters} to $\Re(s)=1/2-\Re(\alpha)+\varepsilon$ unconditionally or $\Re(s)=1/4-\Re(\alpha)+\varepsilon$ under RH.  The contribution of the integral on the new line is bounded by the $O$-term in \eqref{Asymptotic for ratios of all characters}.  In this process, we also encounter two simple poles at $s=1$ and $s=1-\alpha$ such that the corresponding residues are given in \eqref{Residue at s=1} and \eqref{Areswexplicit}, respectively. Direct computations now lead to the main terms given in \eqref{Asymptotic for ratios of all characters}. This completes the proof of Theorem \ref{Theorem for all characters}.

\section{Proof of Theorem \ref{theo:recursive}}

If $(r,l)=1$, then product $rl$ remains square-free if both $r$ and $l$ are. It follows from this that we may apply our induction hypothesis that \eqref{Asymptotic for ratios of all characters} holds with an error term of size $l^{1/2 + \varepsilon} X^{\delta+ \varepsilon}$ for any $0< \delta \leq 1/2$ and any $\Re(\alpha) \ll (\log{X})^{-1}$, $\Im(\alpha) \ll X^{\varepsilon}$.  Hence for such $\alpha$ and any odd, square-free $rl>0$,
\begin{align*}
%%\label{Asymptotic for ratios of all charactersrl}
\begin{split}	
\sum_{\substack{(d,2)=1}} & L(\tfrac{1}{2}+\alpha, \chi^{(8d)})\chi^{(8d)}(rl) w \bfrac {da^2}X \\
=&  \frac{X}{a^2} \frac {\M w(1) \zeta^{(2)}(1+2\alpha)}{2(rl)^{1/2+\alpha}\zeta^{(2)}(2+2\alpha)}\prod_{p|rl} \frac{1-p^{-1}}{1-p^{-2-2\alpha}} \\
&\hspace*{1cm} +\leg{X}{a^2}^{1-\alpha}\M w(1-\alpha) \gamma_{\alpha}(rl)^{-1/2+\alpha} \frac {\zeta^{(2)}(1-2\alpha)}{2\zeta^{(2)}(2)}\prod_{p|rl} \Big(1+p^{-1} \Big)^{-1} +O\Big((lr)^{1/2 + \varepsilon}  \leg{X}{a^2}^{\delta + \varepsilon}\Big ).
\end{split}
\end{align*}
  We substitute the above into the right-hand side of \eqref{M1simplified} to get $M_1=M_{11}+M_{12}+E_1$, where
\begin{align}
\label{MME}
\begin{split}
 M_{11}=& \sum_{\substack{a \leq Y \\ (a, 2l)=1}}\mu(a)\sum_{r | a}\frac {\mu(r)}{r^{1+2\alpha}}\frac{X}{a^2} \M w(1)\frac {\zeta^{(2)}(1+2\alpha)}{2l^{1/2+\alpha}\zeta^{(2)}(2+2\alpha)}\prod_{p|rl} \frac{1-p^{-1}}{1-p^{-2-2\alpha}} , \\
 M_{12}=& \sum_{\substack{a \leq Y \\ (a, 2l)=1}}\mu(a)\sum_{r | a}\frac {\mu(r)}{r}\leg{X}{a^2}^{1-\alpha}\M w(1-\alpha) \gamma_{\alpha}l^{-1/2+\alpha} \frac {\zeta^{(2)}(1-2\alpha)}{2\zeta^{(2)}(2)} \prod_{p|rl} \Big(1+p^{-1} \Big)^{-1}  +O\Big((lr)^{1/2 + \varepsilon}  \leg{X}{a^2}^{\delta + \varepsilon}\Big ) \\
 E_1 \ll & \sum_{\substack{a \leq Y \\ (a, 2l)=1}} \sum_{r | a} \frac{1}{r^{1/2}} (lr)^{1/2 + \varepsilon} \leg{X}{a^2}^{\delta + \varepsilon} \ll X^{\delta + \varepsilon}Y^{1-2\delta} l^{1/2 + \varepsilon}.
\end{split}
\end{align}

Simplifying the above, we arrive at
\begin{align} \label{M1maintermss1alpha1}
  M_{11}=\frac {X\widehat w(1)}{2l^{1/2+\alpha}} \sum_{\substack{a \leq Y \\ (a, 2l)=1}}\frac {\mu(a)}{a^2}\frac {\zeta^{(2a)}(1+2\alpha)}{\zeta^{(2a)}(2+2\alpha)}\prod_{p|l}\frac{1-p^{-1}}{1-p^{-2-2\alpha}} ,
\end{align}
and
\begin{align}
\label{M1maintermss1alpha2}
 M_{12}= \frac {X^{1-\alpha}\widehat w(1-\alpha)}{2\zeta^{(2)}(2)}l^{-1/2+\alpha}\gamma_{\alpha} \Big (\prod_{p|l} \Big(1+\frac {1}{p} \Big)^{-1} \Big )\sum_{\substack{a \leq Y \\ (a, 2l)=1}}\frac {\mu(a)}{a^{2-2\alpha}}\zeta^{(2)}(1-2\alpha)\prod_{p|a} \Big(1+\frac {1}{p} \Big)^{-1} +O\Big((lr)^{1/2 + \varepsilon}  \leg{X}{a^2}^{\delta + \varepsilon}\Big ) .
\end{align}

  Similarly, we may apply our induction hypothesis that \eqref{Asymptotic for ratios of primitive characters} holds with an error term of size $\ll l^{1/2 + \varepsilon} X^{f+ \varepsilon}$ for any $1/2 \leq f \leq 1$ and any $\Re(\alpha) \ll (\log{X})^{-1}$, $\Im(\alpha) \ll X^{\varepsilon}$.  Consequently, for such $\alpha$ and any odd, square-free $rl>0$,
\begin{align}
%%\label{Areswexplicit}
\begin{split}
 \sumstar_{(d,2)=1} &L(\tfrac{1}{2}+\alpha, \chi^{(8d)})\chi^{(8d)}(lr)w \bfrac {dc^2}X \\
=&  \frac{X}{c^2} \frac{\widehat{w}(1)}{2 \zeta^{(2)}(2)} (lr)^{-1/2-\alpha} \zeta^{(2)}(1 + 2\alpha) B_{\alpha}(lr)+
\leg{X}{c^2}^{1-\alpha} \frac{\widehat{w}(1-\alpha)\gamma_{\alpha}}{2 \zeta^{(2)}(2)}  (lr)^{-1/2+\alpha} \zeta^{(2)}(1 - 2\alpha) B_{-\alpha}(lr) \\
& \hspace*{1cm} + O\Big((lr)^{1/2 + \varepsilon}  \leg{X}{c^2}^{f + \varepsilon}\Big ).
\end{split}
\end{align}

So from \eqref{M2simplifiedc}, $M_2=M_{21}+M_{22}+E_2$, where
\begin{align*}
%%\label{MME2}
\begin{split}
 M_{21}=& \frac{X \widehat w(1) }{2l^{1/2+\alpha}\zeta^{(2)}(2)} \sum_{(c,2l)=1}\frac{1}{c^2} \Big(\sum_{\substack{a|c \\ a > Y }}\mu(a)\Big)\sum_{r | c}\frac {\mu(r)}{r^{1+2\alpha}} \zeta^{(2)}(1 + 2\alpha)B_{\alpha}(lr)  := \frac{X \widehat w(1) }{2l^{1/2+\alpha}\zeta^{(2)}(2)}C(\alpha, \alpha), \\
 M_{22}=& \frac {X^{1-\alpha}\widehat w(1-\alpha)}{2\zeta^{(2)}(2)}l^{-1/2+\alpha}\gamma_{\alpha} \sum_{(c,2l)=1}\frac{1}{c^{2-2\alpha}} \Big(\sum_{\substack{a|c \\ a > Y }}\mu(a)\Big)\sum_{r | c}\frac {\mu(r)}{r} \zeta^{(2)}(1 - 2\alpha)B_{-\alpha}(lr) \\
 :=&  \frac {X^{1-\alpha}\widehat w(1-\alpha)}{2\zeta^{(2)}(2)}l^{-1/2+\alpha}\gamma_{\alpha} C(-\alpha, \alpha), \\
 E_2 \ll & X^{\varepsilon} \sum_{(c,2l)=1} \lp \sum_{\substack{a |c \\ a > Y}} 1 \rp  \sum_{r | c} \frac{1}{r^{1/2}} (lr)^{1/2 + \varepsilon} \leg{X}{c^2}^{f + \varepsilon} \ll \frac{X^{f + \varepsilon}}{Y^{2f - 1}} l^{1/2 + \varepsilon}.
\end{split}
\end{align*}
and $C(\pm \alpha, \alpha)$ are defined in the first display on \cite[p.83]{Young1}.  The expression for $C(\alpha, \alpha)$ given in the display above \cite[(4.3)]{Young1} gives that
\begin{align}
\label{M2maintermss12}
 M_{21}=\frac{X \widehat w(1) }{2l^{1/2+\alpha}}\sum_{\substack{a > Y \\ (a, 2l)=1}}\frac {\mu(a)}{a^2}\frac {\zeta^{(2a)}(1+2\alpha)}{\zeta^{(2a)}(2+2\alpha)}\prod_{p|l}\frac{1-p^{-1}}{1-p^{-2-2\alpha}} .
\end{align}
The simplification for $C(-\alpha, \alpha)$ given on \cite[p.85]{Young1} leads to
\begin{align}
\label{M2maintermss12alpha}
  M_{22}=\frac {X^{1-\alpha}\widehat w(1-\alpha)}{2\zeta^{(2)}(2)}l^{-1/2+\alpha}\gamma_{\alpha} \sum_{\substack{a > Y \\ (a, 2l)=1}}\frac {\mu(a)}{a^{2-2\alpha}}\zeta^{(2)}(1-2\alpha)\prod_{p|a} \Big(1+\frac {1}{p} \Big)^{-1}.
\end{align}

From \eqref{M1maintermss1alpha1} and \eqref{M2maintermss12},
\begin{align}
\label{maintermss1}
\begin{split}
 M_{11}+M_{21}=&\frac{X \widehat w(1) }{2l^{1/2+\alpha}}\sum_{\substack{(a, 2l)=1}}\frac {\mu(a)}{a^2}\frac {\zeta^{(2a)}(1+2\alpha)}{\zeta^{(2a)}(2+2\alpha)}\prod_{p|l}\frac{1-p^{-1}}{1-p^{-2-2\alpha}}  = \frac{X \widehat{w}(1)}{2 \zeta^{(2)}(2)} l^{-1/2 - \alpha} \zeta^{(2)}(1 + 2\alpha) B_{\alpha}(l),
\end{split}
\end{align}
 where the last equality above follows from the computations given in the last two displays on \cite[p.97]{Young1}. \newline

  We also deduce from \eqref{M1maintermss1alpha2} and \eqref{M2maintermss12alpha} that
\begin{align}
\label{maintermss2}
\begin{split}
  M_{12}+M_{22}=&\frac {X^{1-\alpha}\widehat w(1-\alpha)}{2\zeta^{(2)}(2)}l^{-1/2+\alpha}\gamma_{\alpha}\zeta^{(2)}(1-2\alpha)\sum_{\substack{(a, 2l)=1}}\frac {\mu(a)}{a^{2-2\alpha}}\prod_{p|a} \Big(1+\frac {1}{p} \Big)^{-1} \\
  =& \frac {X^{1-\alpha}\widehat w(1-\alpha)}{2\zeta^{(2)}(2)}l^{-1/2+\alpha}\gamma_{\alpha}\zeta^{(2)}(1-2\alpha) B_{-\alpha}(l).
\end{split}
\end{align}
Here the last equality above follows from \cite[(3.2)]{Young1}. \newline

Optimizing $Y$ by setting $Y=X^{1/2}$ yields
\begin{align}
\label{E}
\begin{split}
E_1+E_2 \ll l^{1/2 + \varepsilon}X^{1/2 + \varepsilon}.
\end{split}
\end{align}
Now Theorem \ref{theo:recursive} follows from \eqref{maintermss1}--\eqref{E}. This completes the proof.

\vspace*{.5cm}

\noindent{\bf Acknowledgments.}  P. G. is supported in part by NSFC grant 11871082 and L. Z. by the FRG Grant PS43707 at the University of New South Wales.  The authors are very grateful to the anonymous referee for his/her careful inspection of the paper and many helpful comments.

\bibliography{biblio}
\bibliographystyle{amsxport}

\end{document}